\documentclass[final,3p,times]{elsarticle}

\usepackage{amssymb}
\usepackage{amsmath}
\usepackage{amsthm}

\journal{Applied and Computational Harmonic Analysis}

\newcommand{\bbS}{\mathbb{S}}
\newcommand{\bbR}{\mathbb{R}}

\newcommand{\calK}{\mathcal{K}}
\newcommand{\calO}{\mathcal{O}}
\newcommand{\calU}{\mathcal{U}}

\newcommand{\rmd}{\mathrm{d}}
\newcommand{\rme}{\mathrm{e}}
\newcommand{\rmI}{\mathrm{I}}
\newcommand{\rmO}{\mathrm{O}}

\newcommand{\Snn}{\mathbb{S}_{\mathrm{nn}}}

\newcommand{\abs}[1]{|{#1}|}

\newcommand{\bigparen}[1]{\bigl({#1}\bigr)}
\newcommand{\Bigparen}[1]{\Bigl({#1}\Bigr)}
\newcommand{\biggparen}[1]{\biggl({#1}\biggr)}

\newcommand{\bigbracket}[1]{\bigl[{#1}\bigr]}
\newcommand{\Bigbracket}[1]{\Bigl[{#1}\Bigr]}

\newcommand{\set}[1]{\{{#1}\}}
\newcommand{\bigset}[1]{\bigl\{{#1}\bigr\}}

\newcommand{\Biggset}[1]{\Biggl\{{#1}\Biggr\}}

\newcommand{\norm}[1]{\|{#1}\|}

\newcommand{\biggnorm}[1]{\biggl\|{#1}\biggr\|}

\newcommand{\ip}[2]{\langle{#1},{#2}\rangle}

\theoremstyle{plain}
\newtheorem{theorem}{Theorem}
\newtheorem{lemma}[theorem]{Lemma}

\theoremstyle{definition}

\begin{document}
\begin{frontmatter}
\title{Group-theoretic constructions of erasure-robust frames}

\author[AFIT]{Matthew Fickus}
\ead{Matthew.Fickus@gmail.com}
\author[MO]{John Jasper}
\author[AFIT]{Dustin G.~Mixon}
\author[MO]{Jesse Peterson}

\address[AFIT]{Department of Mathematics and Statistics, Air Force Institute of Technology, Wright-Patterson Air Force Base, OH 45433, USA}
\address[MO]{Department of Mathematics, University of Missouri, Columbia, MO 65211, USA}

\begin{abstract}
In the field of compressed sensing, a key problem remains open: to explicitly construct matrices with the restricted isometry property (RIP) whose performance rivals those generated using random matrix theory.  In short, RIP involves estimating the singular values of a combinatorially large number of submatrices, seemingly requiring an enormous amount of computation in even low-dimensional examples.  In this paper, we consider a similar problem involving submatrix singular value estimation, namely the problem of explicitly constructing numerically erasure robust frames (NERFs).  Such frames are the latest invention in a long line of research concerning the design of linear encoders that are robust against data loss.  We begin by focusing on a subtle difference between the definition of a NERF and that of an RIP matrix, one that allows us to introduce a new computational trick for quickly estimating NERF bounds.  In short, we estimate these bounds by evaluating the frame analysis operator at every point of an $\varepsilon$-net for the unit sphere.  We then borrow ideas from the theory of group frames to construct explicit frames and $\varepsilon$-nets with such high degrees of symmetry that the requisite number of operator evaluations is greatly reduced.  We conclude with numerical results, using these new ideas to quickly produce decent estimates of NERF bounds which would otherwise take an eternity.  Though the more important RIP problem remains open, this work nevertheless demonstrates the feasibility of exploiting symmetry to greatly reduce the computational burden of similar combinatorial linear algebra problems.
\end{abstract}

\begin{keyword}
frames \sep erasures \sep numerically erasure-robust frames \sep restricted isometry property
\end{keyword}
\end{frontmatter}


\section{Introduction}
Throughout this work, let $\Phi$ denote a short, real $M\times N$ matrix; though some of the results presented here generalize to the complex setting, others do not.  That is, let $M\leq N$ and let $\Phi=[\varphi_1\ \cdots\ \varphi_N]$ where $\set{\varphi_n}_{n=1}^{N}\subseteq\bbR^M$ are the columns of $\Phi$.  For any  $\calK\subseteq\set{1,\dotsc,N}$, define $K:=\abs{\calK}$ and consider the $M\times K$ submatrix $\Phi_{\calK}$ of $\Phi$ obtained by concatenating the columns $\set{\varphi_n}_{n\in\calK}$.  Fixing $K$, both the restricted isometry property and numerically erasure-robust frames are defined in terms of the extreme singular values of $\Phi_\calK$ as $\calK$ ranges over all $K$-element subsets of $\set{1,\dotsc,N}$; the key difference between the two depends on whether the $\Phi_\calK$'s are tall ($K\leq M$) or short ($M\leq K\leq N$).

In particular, for a fixed $K\leq M$ and $\delta>0$, the matrix $\Phi$ is said to have the \textit{$(K,\delta)$-restricted isometry property} (RIP) if for any $K$-element subset $\calK\subseteq\set{1,\dotsc,N}$, we have
\begin{equation}
\label{equation.definition of RIP}
(1-\delta)\sum_{n\in\calK}\abs{y(n)}^2
\leq\biggnorm{\sum_{n\in\calK}y(n)\varphi_n}^2
\leq(1+\delta)\sum_{n\in\calK}\abs{y(n)}^2,
\quad\forall y\in\bbR^N.
\end{equation}
This means that $\Phi$ acts like a near isometry over all vectors whose support is at most $K$.
More precisely, \eqref{equation.definition of RIP} is equivalent to having $(1-\delta)\leq\norm{\Phi_\calK z}^2\leq(1+\delta)$ for all unit norm $z\in\bbR^K$, which in turn is equivalent to having all of the eigenvalues of the Gram submatrix $\Phi_\calK^*\Phi_{\calK}^{}$ lie in the interval $[1-\delta,1+\delta]$.
This can be viewed as a weakened version of a requirement that $\Phi_\calK^*\Phi_{\calK}^{}=\rmI$.
That is, RIP requires every $K$-element subcollection $\set{\varphi_n}_{n\in\calK}$ of $\set{\varphi_n}_{n=1}^{N}$ to be nearly orthonormal, forming a nice Riesz basis for its span.

RIP matrices are important because they permit efficient sensing and reconstruction of sparse signals---see Theorem~1.3 in~\cite{Candes:08}, for example---a problem which can otherwise be NP-hard~\cite{Natarajan:95}.
In particular, for a fixed $M$, $N$ and $\delta>0$, the goal in such applications is to construct $M\times N$ matrices $\Phi$ which are $(K,\delta)$-RIP where $K$ is as large as possible: larger values of $K$ correspond to a larger class of $N$-dimensional signals that can be fully sensed using only $M$ measurements.
Unfortunately, it has proven notoriously difficult to deterministically construct RIP matrices for large values of $K$.
To be precise, whereas many randomly constructed matrices~\cite{BaraniukDDW:08,RudelsonV:08} have a high probability of being $(K,\delta)$-RIP for any $K\leq CM/\mathrm{polylog}(N)$, all known deterministic methods are either stuck at the ``square-root bottleneck" of having $K\leq C\sqrt{M}$---see~\cite{DeVore:07}, for example---or go only slightly beyond it~\cite{BourgainDFKK:11}.
As detailed in~\cite{BandeiraFMW:arxiv12}, these difficulties stem from the fact that checking that a given $M\times N$ matrix $\Phi$ is $(K,\delta)$-RIP involves estimating the singular values of \smash{$\binom{N}{K}$} submatrices of $\Phi$ of size $M\times K$.
Since singular values are hard to estimate analytically and since \smash{$\binom{N}{K}$} is enormous for even modest choices of $N$ and $K$, this problem poses serious challenges for any analytical or numerical approach.

In this paper, we provide a hybrid analytical/numerical approach for solving a problem that is very similar to that of deterministically constructing RIP matrices.
Though, as we discuss below, our techniques do not immediately generalize to the RIP setting, they nevertheless demonstrate the feasibility of numerically estimating the singular values of a combinatorially large number of matrices.
To be precise, the purpose of this paper is to provide new deterministic constructions of \textit{numerically erasure-robust frames} (NERFs): given $M\leq K\leq N$ and $0<\alpha\leq\beta<\infty$, we say that $\set{\varphi_n}_{n=1}^{N}$ is a $(K,\alpha,\beta)$-NERF for $\bbR^M$ if for any $K$-element subset $\calK\subseteq\set{1,\dotsc,N}$, we have
\begin{equation}
\label{equation.definition of NERF}
\alpha\norm{x}^2
\leq\sum_{n\in\calK}\abs{\ip{x}{\varphi_n}}^2
\leq\beta\norm{x}^2,
\quad\forall x\in\bbR^M.
\end{equation}
This means that $\set{\varphi_n}_{n\in\calK}$ is a frame for $\bbR^M$ with frame bounds $\alpha,\beta$ regardless of the choice of $\calK$.
Note~\eqref{equation.definition of NERF} is equivalent to having $\alpha\leq\norm{\Phi_{\calK}^* x}^2\leq\beta$ for all unit norm $x\in\bbR^M$, which in turn is equivalent to having all the eigenvalues of the subframe operator $\Phi_{\calK}^{}\Phi_{\calK}^*$ lie in the interval $[\alpha,\beta]$.

Thus, when $\alpha\approx\beta$, we see that a NERF is a short matrix for which every \textit{short} submatrix of a given size is well-conditioned; meanwhile an RIP matrix is a short matrix for which every \textit{tall} submatrix of a given size is well-conditioned.
This subtle difference in definition leads to vastly different envisioned applications; while RIP matrices are primarily intended for compressed sensing, NERFs are the latest invention~\cite{FickusM:12} in a long line of research~\cite{GoyalKK:01,CasazzaK:03,HolmesP:laa04,PuschelK:dcc05,AlexeevCM:arxiv11} concerning the design of linear encoders that are robust against data loss.
Here, one encodes $x\in\bbR^M$ as a higher-dimensional vector $\Phi^* x\in\bbR^N$ which is then transmitted in a channel with erasures and additive noise, yielding $y=\Phi_{\calK}^*x+\varepsilon$, where $\calK$ corresponds to the entries of $y$ that were not erased.
The problem is then to reconstruct $x$ from $y$.
In the standard least squares approach, this reconstruction is achieved by solving the normal equations \smash{$\Phi_{\calK}^{}\Phi_{\calK}^* x=\Phi_{\calK}^{}y$}.
The numerical stability of this method depends heavily on the condition number of \smash{$\Phi_{\calK}^{}\Phi_{\calK}^*$} which, in accordance with~\eqref{equation.definition of NERF}, has an upper bound of $\frac{\beta}{\alpha}$.
As such, when designing NERFs, our goal is to make $K\geq M$ as small as possible---meaning we are robust against more erasures---while keeping \smash{$\frac{\beta}{\alpha}$} from becoming too large;
this contrasts with RIP where, as mentioned above, the goal is to make $K\leq M$ as large as possible while keeping $\delta$ small.

The first study of erasure-robust frames was given in~\cite{GoyalKK:01}.
In subsequent years, it was shown that unit norm tight frames are optimally robust against one erasure~\cite{CasazzaK:03} while Grassmannian frames~\cite{StrohmerH:03} are optimally robust against two erasures~\cite{HolmesP:laa04};
this fact is in part responsible for the continued interest in equiangular tight frames~\cite{StrohmerH:03,XiaZG:05,FickusMT:10}.
In a purely algebraic sense, the frames which are most robust against erasures are those that have full spark~\cite{PuschelK:dcc05,AlexeevCM:arxiv11}, namely those collections of vectors $\set{\varphi_n}_{n=1}^{N}$ in $\bbR^M$ that have the property that any $M$ of them form a basis for $\bbR^M$.
However, in the presence of additive noise, what truly matters is conditioning of the subcollection, not its rank.
This realization led to the introduction of NERFs in~\cite{FickusM:12}.
There, in attempting to construct NERFs, the authors ran into many of the same difficulties encountered when attempting to construct RIP matrices.
This is due to the fact that both the RIP~\eqref{equation.definition of RIP} and NERF~\eqref{equation.definition of NERF} conditions are seemingly combinatorial, requiring one to estimate the eigenvalues of $\smash{\binom{N}{K}}$ submatrices of $\Phi$.
Indeed, to date, the best known low-redundancy NERFs are obtained using random matrix theory~\cite{FickusM:12}, essentially because when coupled with a union bound, such constructions allow one to consider each $M\times K$ submatrix of $\Phi$ independently.
Such constructions are becoming increasingly important as NERFs are proving useful outside of the original communications-based applications that inspired them; they can, for example, be used to reconstruct a signal using only the magnitudes of its frame coefficients~\cite{AlexeevBFM:12}.

In the next section, we exploit the subtle differences in definition between NERFs and RIP matrices to introduce a new trick which will enable us to estimate NERF bounds much faster than we can estimate restricted isometry constants.
The key idea is to evaluate the entire frame analysis operator over a large grid of points on the sphere---an $\varepsilon$-net---and, at each point, \textit{sort} the resulting inner products according to magnitude.
This trick reduces the number of computations from being exponential in $N$ to being exponential in $M$.
In Section~$3$, we then show how to exploit ideas from group frames to further reduce the computational complexity from being exponential in $M$ to being subexponential in $M$.
There, the method involves using a large group of orthogonal matrices---the symmetry group of the cube, for example---to simultaneously generate both the NERF and the $\varepsilon$-net used to estimate its bound.
Essentially, this trick allows us to work with an $\varepsilon$-net that only covers a very small portion of the unit sphere, namely the set of all unit norm vectors with nonnegative entries sorted in nondecreasing order.
In the fourth section, we then show how to explicitly construct such an $\varepsilon$-net, and discuss how its size compares with another $\varepsilon$-net generated by a popular method.
Finally, in Section~$5$, we conclude with numerical experimentation, using our theory to construct explicit examples of NERFs and concretely estimate their bounds.


\section{Estimating NERF bounds with $\varepsilon$-nets}

To motivate the results of this section, we begin with a few observations on NERF bounds.
Given $M\leq K\leq N$, recall that if $\set{\varphi}_{n=1}^{N}$ is a $(K,\alpha,\beta)$-NERF for $\bbR^M$ then \eqref{equation.definition of NERF} holds for all $K$-element subsets $\calK\subseteq\set{1,\dotsc,N}$.
To better understand the problem of constructing NERFs, let's find an explicit expression for the \textit{optimal lower and upper NERF bounds}, namely the largest value of $\alpha$ and smallest value of $\beta$ for which~\eqref{equation.definition of NERF} holds for all $\calK$.
To do this, we first note that for any fixed subset $\calK$, the largest value of $\alpha$ and smallest value of $\beta$ for which~\eqref{equation.definition of NERF} holds are
\begin{equation}
\label{equation.derivation of optimal NERF bounds 1}
\alpha_\calK
:=\min_{\norm{x}=1}\sum_{n\in\calK}\abs{\ip{x}{\varphi_n}}^2,
\qquad
\beta_\calK
:=\max_{\norm{x}=1}\sum_{n\in\calK}\abs{\ip{x}{\varphi_n}}^2,
\end{equation}
namely the smallest and largest eigenvalues of $\Phi_{\calK}^{}\Phi_{\calK}^*$, respectively.
As such, the largest value of $\alpha$ and smallest value of $\beta$ for which~\eqref{equation.definition of NERF} simultaneously holds for all $K$-element subsets $\calK$ are
\begin{equation}
\label{equation.definition of optimal NERF bounds}
\alpha_K
:=\min_{\abs{\calK}=K}\alpha_\calK
=\min_{\abs{\calK}=K}\min_{\norm{x}=1}\sum_{n\in\calK}\abs{\ip{x}{\varphi_n}}^2,
\qquad
\beta_K
:=\max_{\abs{\calK}=K}\beta_\calK
=\max_{\abs{\calK}=K}\max_{\norm{x}=1}\sum_{n\in\calK}\abs{\ip{x}{\varphi_n}}^2,
\end{equation}
respectively.
That is, computing the optimal NERF bounds \eqref{equation.definition of optimal NERF bounds} involves finding the extreme eigenvalues of each of the $\smash{\binom{N}{K}}$ subframe operators of the form $\Phi_{\calK}^{}\Phi_{\calK}^*$.
For even modestly-sized choices of $N$ and $K$, this leads to an enormous amount of computation;
as noted in the introduction, this parallels the computational difficulties that have, for several years now, stymied serious progress on the deterministic RIP construction problem.

We now make three key observations which, taken together, give a different perspective on the NERF bound estimation problem.
First, note that we can interchange the order of optimization in~\eqref{equation.definition of optimal NERF bounds}:
\begin{equation}
\label{equation.derivation of optimal NERF bounds 2}
\alpha_K
=\min_{\norm{x}=1}\min_{\abs{\calK}=K}\sum_{n\in\calK}\abs{\ip{x}{\varphi_n}}^2,
\qquad
\beta_K
=\max_{\norm{x}=1}\max_{\abs{\calK}=K}\sum_{n\in\calK}\abs{\ip{x}{\varphi_n}}^2.
\end{equation}
That is, whereas~\eqref{equation.definition of optimal NERF bounds} asks us to ``take any of the \smash{$\binom{N}{K}$} subsets of $\set{1,\dotsc,N}$ and then find the unit norm vectors $x\in\bbR^M$ which are most orthogonal/most parallel to $\set{\varphi_n}_{n\in\calK}$," the alternative, but equivalent formulation~\eqref{equation.derivation of optimal NERF bounds 2} asks us to ``take any unit norm $x\in\bbR^M$, and then find the $K$ vectors in $\set{\varphi_n}_{n=1}^{N}$ which are most orthogonal/most parallel to it."
The second key observation is that for any fixed unit norm $x$, the subsets $\calK\subseteq\set{1,\dotsc,N}$ for which $\sum_{n\in\calK}\abs{\ip{x}{\varphi_n}}^2$ is minimized and maximized are quickly and easily obtained by sorting the values $\bigset{\abs{\ip{x}{\varphi_n}}^2}_{n=1}^{N}$ and then summing the $K$ smallest values and largest values, respectively.
That is,
\begin{equation}
\label{equation.derivation of optimal NERF bounds 3}
\alpha_K
=\min_{\norm{x}=1}\sum_{n=1}^K\abs{\ip{x}{\varphi_{\sigma(n)}}}^2,
\qquad
\beta_K
=\max_{\norm{x}=1}\sum_{n=N-K+1}^N\abs{\ip{x}{\varphi_{\sigma(n)}}}^2,
\end{equation}
where $\sigma$ is an $x$-dependent permutation of $\set{1,\dotsc,N}$ chosen so that the values $\bigset{\abs{\ip{x}{\varphi_{\sigma(n)}}}^2}_{n=1}^{N}$ are arranged in nondecreasing order.
The third and final key observation is that in the envisioned applications of NERFs, we do not need to know $\alpha_K$ and $\beta_K$ exactly; it suffices to have good estimates of them.
In particular, it should not be necessary to evaluate
\begin{equation}
\label{equation.derivation of optimal NERF bounds 4}
\alpha_K(x):=\sum_{n=1}^K\abs{\ip{x}{\varphi_{\sigma(n)}}}^2,
\qquad
\beta_K(x):=\sum_{n=N-K+1}^N\abs{\ip{x}{\varphi_{\sigma(n)}}}^2,
\end{equation}
at each of the infinitely many points $x$ on the unit sphere in $\bbR^M$.
Rather, for the purposes of estimating $\alpha_K$ and $\beta_K$, it should suffice to evaluate~\eqref{equation.derivation of optimal NERF bounds 4} over a large, finite grid of points on the sphere, namely over an $\varepsilon$-net.

To be precise, a set of points $\set{\psi_p}_{p=1}^{P}$ is said to be an \textit{$\varepsilon$-net} for a set $\Omega$ equipped with a metric $d$ if for every $x\in\Omega$ there exists $\psi_p$ such that $d(x,\psi_p)\leq\varepsilon$.
It turns out that it is most convenient for us to work with an $\varepsilon$-net for the projective sphere in $\bbR^M$ under the chordal distance \smash{$d(x_1,x_2):=\sqrt{1-\abs{\ip{x_1}{x_2}}^2}$}.
In particular, given $\varepsilon>0$ and letting $S^{M-1}$ denote the unit sphere in $\bbR^M$, we want
\begin{equation}
\label{equation.definition of epsilon net}
\set{\psi_p}_{p=1}^{P}\subseteq\bbS^{M-1} \text{ s.t. }\forall x\in\bbS^{M-1},\ \exists p\text{ s.t. } \abs{\ip{x}{\psi_p}}^2\geq1-\varepsilon^2.
\end{equation}
Given such a net we, being inspired by~\eqref{equation.derivation of optimal NERF bounds 3}, define \textit{$\varepsilon$-approximate lower} and \textit{upper NERF bounds} by
\begin{equation}
\label{equation.definition of approximate NERF bounds}
\alpha_{K,\varepsilon}
:=\min_{p=1,\dotsc,P}\sum_{n=1}^K\abs{\ip{\psi_p}{\varphi_{\sigma(n)}}}^2,
\qquad
\beta_{K,\varepsilon}
:=\max_{p=1,\dotsc,P}\sum_{n=N-K+1}^N\abs{\ip{\psi_p}{\varphi_{\sigma(n)}}}^2,
\end{equation}
where $\sigma$ is a $p$-dependent permutation of $\set{1,\dotsc,N}$ chosen so that the values $\bigset{\abs{\ip{\psi_p}{\varphi_{\sigma(n)}}}^2}_{n=1}^{N}$ are arranged in nondecreasing order.
That is, given an $\varepsilon$-net~\eqref{equation.definition of epsilon net}, we form the estimates~\eqref{equation.definition of approximate NERF bounds} by the following algorithm: for each $p=1,\dotsc,P$, compute $\Phi^*\psi_p=\set{\ip{\psi_p}{\varphi_n}}_{n=1}^{N}$, then sort $\bigset{\abs{\ip{\psi_p}{\varphi_n}}^2}_{n=1}^{N}$ in nondecreasing order, and let $\alpha_{K,\varepsilon}$ and $\beta_{K,\varepsilon}$ be the sum of the first $K$ and last $K$ resulting values, respectively.
We now make this analysis rigorous, bounding the optimal lower and upper NERF bounds in terms of the estimates~\eqref{equation.definition of approximate NERF bounds}; note that as $\varepsilon$ gets small these bounds become exact.

\begin{theorem}
\label{theorem.NERF bounds for general frames}
Let $M\leq K\leq N$, let $\set{\varphi_n}_{n=1}^{N}\subseteq\bbR^M$, and let \smash{$\set{\psi_p}_{p=1}^{P}$} be any $\varepsilon$-net \eqref{equation.definition of epsilon net} for $\bbS^{M-1}$.
Then, the optimal upper and lower NERF bounds~\eqref{equation.definition of optimal NERF bounds} for $\set{\varphi_n}_{n=1}^{N}$ are estimated by the $\varepsilon$-approximate bounds~\eqref{equation.definition of approximate NERF bounds} according to
\begin{equation}
\label{equation.NERF bounds for general frames 1}
\tfrac1{1-\varepsilon^2}\bigparen{\alpha_{K,\varepsilon}-\tfrac{\varepsilon^2}{1-\varepsilon^2}\beta_{K,\varepsilon}}
\leq\alpha_K
\leq\alpha_{K,\varepsilon},
\qquad
\beta_{K,\varepsilon}
\leq\beta_K
\leq\tfrac1{1-\varepsilon^2}\beta_{K,\varepsilon}.
\end{equation}
\end{theorem}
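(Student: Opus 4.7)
The plan is to prove the easy inequalities $\alpha_K\leq\alpha_{K,\varepsilon}$ and $\beta_{K,\varepsilon}\leq\beta_K$ by noting that $\set{\psi_p}\subseteq\bbS^{M-1}$, so the finite min/max defining the $\varepsilon$-approximate bounds is taken over a subset of the full sphere. For the two hard inequalities I would exploit a structural observation about the extremizers. Suppose $x^*$ attains the maximum in~\eqref{equation.derivation of optimal NERF bounds 2} defining $\beta_K$, and let $\calK^*$ be an associated $K$-subset indexing the $K$ largest values of $\abs{\ip{x^*}{\varphi_n}}^2$. Set $A:=\Phi_{\calK^*}^{}\Phi_{\calK^*}^*$. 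Then for every unit $y$, the form $y^T A y=\sum_{n\in\calK^*}\abs{\ip{y}{\varphi_n}}^2$ is a sum over one fixed $K$-subset, so $y^T A y\leq\beta_K(y)\leq\beta_K$, with equality throughout at $y=x^*$. Hence $x^*$ is a top eigenvector of the symmetric PSD matrix $A$ with eigenvalue $\beta_K$. The mirror argument shows that if $x^*$ attains $\alpha_K$ with associated $K$-subset $\calK_*$ indexing the $K$ smallest values, then $x^*$ is a bottom eigenvector of $\Phi_{\calK_*}^{}\Phi_{\calK_*}^*$ with eigenvalue $\alpha_K$.

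Next, I would invoke the $\varepsilon$-net property to pick $\psi_p$ with $c:=\ip{x^*}{\psi_p}$ satisfying $c^2\geq1-\varepsilon^2$; flipping the sign if necessary, take $c\geq0$ and decompose $\psi_p=cx^*+sw$ with $w\perp x^*$, $\norm{w}=1$, $s=\sqrt{1-c^2}\in[0,\varepsilon]$. The payoff of the eigenvector observation is that the cross term $2cs\,x^{*T}Aw=2cs\lambda\ip{x^*}{w}$ in
\[
\psi_p^T A\psi_p = c^2\lambda + 2cs\,x^{*T}Aw + s^2 w^T A w
\]
vanishes, leaving $\psi_p^T A\psi_p = c^2\lambda+s^2 w^T Aw$, where $\lambda$ equals $\beta_K$ or $\alpha_K$ accordingly. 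For the $\beta_K$ bound, I would drop the nonnegative tail and use $\psi_p^T A\psi_p\leq\beta_K(\psi_p)\leq\beta_{K,\varepsilon}$ to conclude $c^2\beta_K\leq\beta_{K,\varepsilon}$, whence $\beta_K\leq\beta_{K,\varepsilon}/(1-\varepsilon^2)$.

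For the $\alpha_K$ bound, I would bound $\psi_p^T A\psi_p\geq\alpha_K(\psi_p)\geq\alpha_{K,\varepsilon}$ below and $w^T A w\leq\beta_K\leq\beta_{K,\varepsilon}/(1-\varepsilon^2)$ above (applying the just-proved upper bound to the unit vector $w$), obtaining
\[
\alpha_K\geq\frac{\alpha_{K,\varepsilon}-s^2\beta_{K,\varepsilon}/(1-\varepsilon^2)}{c^2}.
\]
A brief calculus check shows that this right-hand side, viewed as a function of $s^2\in[0,\varepsilon^2]$ with $c^2=1-s^2$, is decreasing (its derivative has sign $\alpha_{K,\varepsilon}-\beta_{K,\varepsilon}/(1-\varepsilon^2)\leq0$), so it is minimized at $s^2=\varepsilon^2$, producing the claimed bound. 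The main subtlety I anticipate is the eigenvector observation itself: a naive triangle- or Cauchy--Schwarz-based bound yields only $\beta_K\leq\beta_{K,\varepsilon}/(1-\varepsilon)^2$, so spotting and leveraging the variational characterization of $x^*$ is what recovers the sharper $1/(1-\varepsilon^2)$ constant.
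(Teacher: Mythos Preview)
Your proof is correct and follows essentially the same approach as the paper: both arguments pick a net point $\psi_p$ near the relevant eigenvector of a subframe operator $\Phi_{\calK}^{}\Phi_{\calK}^*$, expand the quadratic form $\psi_p^T\Phi_{\calK}^{}\Phi_{\calK}^*\psi_p$ in the eigenbasis so that cross terms vanish, and then bound it above and below using $\beta_{K,\varepsilon}$ and $\alpha_{K,\varepsilon}$. The only cosmetic difference is that the paper carries out the spectral-decomposition argument for \emph{every} $\calK$ and then optimizes, whereas you first single out the extremal $\calK^*$ and deduce that the sphere-extremizer $x^*$ is an eigenvector via its variational characterization---a slight streamlining, but the same mechanism.
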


\begin{proof}
For any $K$-element subset $\calK\subseteq\set{1,\dotsc,N}$, let $\set{\lambda_m}_{m=1}^{M}$ and $\set{u_m}_{m=1}^{M}$ be the eigenvalues and corresponding orthonormal eigenbasis of the $M\times M$ frame operator of the subcollection $\set{\varphi_n}_{n\in\calK}$, yielding the spectral decomposition
\begin{equation*}
\sum_{n\in\calK}\varphi_n^{}\varphi_n^*
=\Phi_{\calK}^{}\Phi_{\calK}^*
=\sum_{m=1}^{M}\lambda_m u_m^{}u_m^*,
\end{equation*}
where $x^*$ denotes the $1\times M$ adjoint (transpose) of some $M\times 1$ vector $x$.
Now, given any $x\in\bbR^M$, conjugating this expression by $x$ gives
\begin{equation}
\label{equation.proof of NERF bounds for general frames 1}
\sum_{n\in\calK}\abs{\ip{x}{\varphi_n}}^2
=x^*\sum_{n\in\calK}\varphi_n^{}\varphi_n^*x
=x^*\Phi_{\calK}^{}\Phi_{\calK}^*x
=x^*\sum_{m=1}^{M}\lambda_m u_m^{}u_m^*x
=\sum_{m=1}^{M}\lambda_m\abs{\ip{x}{u_m}}^2.
\end{equation}
Note that since $\set{u_m}_{m=1}^{M}$ is an orthonormal basis, then for any unit norm $x$, the values \smash{$\bigset{\abs{\ip{x}{u_m}}^2}_{m=1}^{M}$} sum to one, meaning that~\eqref{equation.proof of NERF bounds for general frames 1} corresponds to a weighted average of the eigenvalues $\set{\lambda_m}_{m=1}^{M}$.
Taking these eigenvalues in nondecreasing order, note that \eqref{equation.derivation of optimal NERF bounds 1} gives $\lambda_1=\alpha_{\calK}$ and $\lambda_M=\beta_{\calK}$, meaning these weighted averages lie in the interval $[\alpha_{\calK},\beta_{\calK}]$.
Since the $\varepsilon$-approximate bounds~\eqref{equation.definition of approximate NERF bounds} are examples of such averages for certain choices of $x$ and $\calK$, we immediately obtain two of the four inequalities in~\eqref{equation.NERF bounds for general frames 1}:
\begin{equation*}
\alpha_K
=\min_{\abs{\calK}=K}\alpha_\calK
\leq \alpha_{K,\varepsilon}
\leq \beta_{K,\varepsilon}
\leq \max_{\abs{\calK}=K}\beta_\calK
=\beta_K.
\end{equation*}
Next, to obtain the upper bound on $\beta_K$ given in~\eqref{equation.NERF bounds for general frames 1} we, for any given $\calK$, consider~\eqref{equation.proof of NERF bounds for general frames 1} in the case where $x$ is chosen to be the member of the $\varepsilon$-net $\set{\psi_p}_{p=1}^{P}$ which is guaranteed to be close to the eigenvector $u_M$ corresponding to the largest eigenvalue $\lambda_M=\beta_{\calK}$.
That is, picking $p$ such that \smash{$\abs{\ip{\psi_p}{u_M}}^2\geq 1-\varepsilon^2$} and letting $x=\psi_p$ in~\eqref{equation.proof of NERF bounds for general frames 1} gives
\begin{equation}
\label{equation.proof of NERF bounds for general frames 2}
\sum_{n\in\calK}\abs{\ip{\psi_p}{\varphi_n}}^2
=\sum_{m=1}^{M}\lambda_m\abs{\ip{\psi_p}{u_m}}^2
\geq\lambda_M\abs{\ip{\psi_p}{u_M}}^2
=\beta_{\calK}(1-\varepsilon^2).
\end{equation}
At the same time, the left hand side of~\eqref{equation.proof of NERF bounds for general frames 2} is but one of the $\smash{\binom{N}{K}}$ ways to sum $K$ choices of the values $\bigset{\abs{\ip{\psi_p}{\varphi_n}}^2}_{n=1}^{N}$ and is, of course, no larger than the sum of the $K$ largest values:
\begin{equation}
\label{equation.proof of NERF bounds for general frames 3}
\sum_{n\in\calK}\abs{\ip{\psi_p}{\varphi_n}}^2
\leq \max_{\abs{\calK'}=K}\sum_{n\in\calK'}\abs{\ip{\psi_p}{\varphi_n}}^2
\leq \max_{p'=1,\dotsc,P}\max_{\abs{\calK'}=K}\sum_{n\in\calK'}\abs{\ip{\psi_{p'}}{\varphi_n}}^2
=\beta_{K,\varepsilon}.
\end{equation}
Combining~\eqref{equation.proof of NERF bounds for general frames 2} and~\eqref{equation.proof of NERF bounds for general frames 3}, we see that $\beta_{\calK}(1-\varepsilon^2)\leq\beta_{K,\varepsilon}$ for all $\calK$ and so we obtain the upper bound on $\beta_K$ from~\eqref{equation.NERF bounds for general frames 1}:
\begin{equation}
\label{equation.proof of NERF bounds for general frames 4}
\beta_K
=\max_{\abs{\calK}=K}\beta_\calK
\leq \tfrac1{1-\varepsilon^2}\beta_{K,\varepsilon}.
\end{equation}
We now use a similar approach to find the lower bound on $\alpha_K$; for any given $\calK$, taking $x=\psi_p$ in~\eqref{equation.proof of NERF bounds for general frames 1} gives
\begin{equation*}
\sum_{n\in\calK}\abs{\ip{\psi_p}{\varphi_n}}^2
=\sum_{m=1}^{M}\lambda_m\abs{\ip{\psi_p}{u_m}}^2
\leq\lambda_1\abs{\ip{\psi_p}{u_1}}^2+\lambda_M\sum_{m=2}^{M}\abs{\ip{\psi_p}{u_m}}^2
=\alpha_\calK\abs{\ip{\psi_p}{u_1}}^2+\beta_\calK(1-\abs{\ip{\psi_p}{u_1}}^2).
\end{equation*}
Simplifying, and picking $p$ such that $\psi_p$ is the $\varepsilon$-net member that satisfies $\abs{\ip{\psi_p}{u_1}}^2\geq1-\varepsilon^2$ then yields
\begin{equation}
\label{equation.proof of NERF bounds for general frames 5}
\sum_{n\in\calK}\abs{\ip{\psi_p}{\varphi_n}}^2
=\beta_\calK-(\beta_\calK-\alpha_\calK)\abs{\ip{\psi_p}{u_1}}^2
\leq\beta_\calK-(\beta_\calK-\alpha_\calK)(1-\varepsilon^2)
=(1-\varepsilon^2)\alpha_\calK+\varepsilon^2\beta_\calK.
\end{equation}
At the same time, the left hand side of~\eqref{equation.proof of NERF bounds for general frames 5} is no smaller than the sum of the $K$ smallest values of $\bigset{\abs{\ip{\psi_p}{\varphi_n}}^2}_{n=1}^{N}$, meaning
\begin{equation}
\label{equation.proof of NERF bounds for general frames 6}
\sum_{n\in\calK}\abs{\ip{\psi_p}{\varphi_n}}^2
\geq \min_{\abs{\calK'}=K}\sum_{n\in\calK'}\abs{\ip{\psi_p}{\varphi_n}}^2
\geq \min_{p'=1,\dotsc,P}\min_{\abs{\calK'}=K}\sum_{n\in\calK'}\abs{\ip{\psi_{p'}}{\varphi_n}}^2
=\alpha_{K,\varepsilon}.
\end{equation}
Combining~\eqref{equation.proof of NERF bounds for general frames 5} and~\eqref{equation.proof of NERF bounds for general frames 6} and then using~\eqref{equation.proof of NERF bounds for general frames 4} gives $\alpha_{K,\varepsilon}\leq(1-\varepsilon^2)\alpha_\calK+\varepsilon^2\beta_\calK\leq(1-\varepsilon^2)\alpha_\calK+\tfrac{\varepsilon^2}{1-\varepsilon^2}\beta_{K,\varepsilon}$.
At this point, solving for $\alpha_\calK$ and then minimizing over all $\calK$ gives the lower bound on $\alpha_K$ from~\eqref{equation.NERF bounds for general frames 1}:
\begin{equation*}
\tfrac1{1-\varepsilon^2}\bigparen{\alpha_{K,\varepsilon}-\tfrac{\varepsilon^2}{1-\varepsilon^2}\beta_{K,\varepsilon}}
\leq\min_{\calK}\alpha_\calK
=\alpha_K.\qedhere
\end{equation*}
\end{proof}

We now give some remarks about the previous result and its proof.
Note that the lower bound on $\alpha_K$ in~\eqref{equation.NERF bounds for general frames 1} is more complicated than the upper bound on $\beta_K$, as it involves both the approximate lower and upper frame bounds $\alpha_{K,\varepsilon}$ and $\beta_{K,\varepsilon}$ from~\eqref{equation.definition of approximate NERF bounds}.
For those familiar with frame theory, this is to be expected: lower frame bounds are usually more difficult to estimate than upper frame bounds.
At the same time, this fact is troubling, since the lower estimate is much more important than the upper estimate.
Indeed, for small $K$, there is a real danger that the lower NERF bound could approach zero, see Theorem~11 of \cite{FickusM:12}, for example.
Meanwhile, it is impossible for the upper NERF bound to grow too large: if the entire original frame $\set{\varphi_n}_{n=1}^{N}$ has an upper frame bound of $B$, then the upper NERF bound is at most $B$ since
\begin{equation}
\label{equation.upper estimate in terms of overall frame bound}
\beta_K
=\max_{\norm{x}=1}\max_{\abs{\calK}=K}\sum_{n\in\calK}\abs{\ip{x}{\varphi_n}}^2
\leq\max_{\norm{x}=1}\sum_{n=1}^N\abs{\ip{x}{\varphi_n}}^2
\leq\max_{\norm{x}=1}B\norm{x}^2
=B.
\end{equation}
Moreover, though the estimates~\eqref{equation.definition of approximate NERF bounds} become exact as $\varepsilon$ tends to zero, choosing a small $\varepsilon$ forces the number of elements $P$ in the $\varepsilon$-net~\eqref{equation.definition of epsilon net} to be large, making it prohibitively expensive to compute the approximate NERF bounds~\eqref{equation.definition of approximate NERF bounds}.
In practice, we are therefore compelled to not take $\varepsilon$ to be too small; see the final section for a longer discussion on practical aspects regarding the size of $\varepsilon$.
We are therefore faced with a quandary: to ease our computational burden, we want to take an $\varepsilon$ which is not too small, yet doing so leads to coarse estimates~\eqref{equation.definition of approximate NERF bounds}.
Fortunately, by starting with a good enough frame $\set{\varphi_n}_{n=1}^{N}$ we can find an alternative set of estimates that, according to numerical experimentation, outperform~\eqref{equation.definition of approximate NERF bounds} in cases where $K$ and $\varepsilon$ are not too small.

To be precise, a sequence of vectors $\set{\varphi_n}_{n=1}^{N}$ is said to be \textit{unit norm} if $\norm{\varphi_n}=1$ for all $n$ and is a \textit{tight frame} if $\Phi\Phi^*=A\rmI$ for some $A>0$.
Moreover, if both properties hold true, then $\set{\varphi_n}_{n=1}^{N}$ is called a \textit{unit norm tight frame} (UNTF) and the tight frame constant $A$ is necessarily the \textit{redundancy} $\frac NM$ since $MA=\mathrm{Tr}(A\rmI)=\mathrm{Tr}(\Phi\Phi^*)=\mathrm{Tr}(\Phi^*\Phi)=N$.
In particular, if $\set{\varphi_n}_{n=1}^{N}$ is a UNTF for $\bbR^M$ then $\sum_{n=1}^N\abs{\ip{x}{\varphi_n}}^2=\frac NM\norm{x}^2$ for all $x$ and so~\eqref{equation.upper estimate in terms of overall frame bound} gives $\beta_K\leq\frac MN$.
Armed with this fact, we return to the end of the proof of Theorem~\ref{theorem.NERF bounds for general frames}, again combining~\eqref{equation.proof of NERF bounds for general frames 5} and~\eqref{equation.proof of NERF bounds for general frames 6}.
This time however, we forgo~\eqref{equation.proof of NERF bounds for general frames 4} in favor of the estimate $\beta_K\leq\frac MN$, yielding
$\alpha_{K,\varepsilon}\leq(1-\varepsilon^2)\alpha_\calK+\varepsilon^2\beta_\calK\leq(1-\varepsilon^2)\alpha_\calK+\varepsilon^2\tfrac{N}{M}$ for all $\calK$ and thus
\begin{equation*}
\tfrac1{1-\varepsilon^2}(\alpha_{K,\varepsilon}-\varepsilon^2\tfrac NM)
\leq\min_{\abs{\calK}=K}\alpha_\calK
=\alpha_K.
\end{equation*}
We summarize these facts in the following result.
\begin{theorem}
\label{theorem.NERF bounds for UNTFs}
Let $M\leq K\leq N$, let $\set{\varphi_n}_{n=1}^{N}$ be a unit norm tight frame for $\bbR^M$, and let \smash{$\set{\psi_p}_{p=1}^{P}$} be any $\varepsilon$-net \eqref{equation.definition of epsilon net} for $\bbS^{M-1}$.
Then, the optimal upper and lower NERF bounds~\eqref{equation.definition of optimal NERF bounds} for $\set{\varphi_n}_{n=1}^{N}$ are bounded by $\alpha_{K,\varepsilon}$~\eqref{equation.definition of approximate NERF bounds} according to:
\begin{equation*}
\tfrac1{1-\varepsilon^2}(\alpha_{K,\varepsilon}-\varepsilon^2\tfrac NM)
\leq\alpha_K
\leq\beta_K
\leq\tfrac NM.
\end{equation*}
\end{theorem}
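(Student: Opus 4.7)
The plan is to leverage the machinery of Theorem~\ref{theorem.NERF bounds for general frames} together with the UNTF hypothesis, which provides much sharper a priori control of the upper NERF bound. The three inequalities in the statement require very different amounts of work: the middle inequality $\alpha_K\leq\beta_K$ is immediate from~\eqref{equation.definition of optimal NERF bounds} since $\alpha_\calK\leq\beta_\calK$ for every $\calK$; the upper bound $\beta_K\leq\frac{N}{M}$ is a soft consequence of the global upper frame bound of a UNTF; and the lower bound on $\alpha_K$ is the only part requiring real work, obtained by replaying the final step of the proof of Theorem~\ref{theorem.NERF bounds for general frames} with the UNTF estimate substituted for the generic upper bound on $\beta_K$.

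For $\beta_K\leq\frac{N}{M}$, I would first identify the tight frame constant. Writing $\Phi\Phi^*=A\rmI$ and taking traces gives $MA=\mathrm{Tr}(\Phi\Phi^*)=\mathrm{Tr}(\Phi^*\Phi)=\sum_{n=1}^N\norm{\varphi_n}^2=N$, so $A=\frac{N}{M}$. Hence $\sum_{n=1}^N\abs{\ip{x}{\varphi_n}}^2=\frac{N}{M}\norm{x}^2$ for every $x\in\bbR^M$, and plugging this into~\eqref{equation.upper estimate in terms of overall frame bound} in place of the generic upper frame bound $B$ yields $\beta_K\leq\frac{N}{M}$ at once.

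For the lower bound on $\alpha_K$, I would reuse the key estimate that appears midway through the proof of Theorem~\ref{theorem.NERF bounds for general frames}. Combining~\eqref{equation.proof of NERF bounds for general frames 5} and~\eqref{equation.proof of NERF bounds for general frames 6} there gives, for every $K$-element $\calK$, the inequality $\alpha_{K,\varepsilon}\leq(1-\varepsilon^2)\alpha_\calK+\varepsilon^2\beta_\calK$. In the general case, Theorem~\ref{theorem.NERF bounds for general frames} handles the $\beta_\calK$ term via $\beta_\calK\leq\beta_K\leq\frac{1}{1-\varepsilon^2}\beta_{K,\varepsilon}$, but under the UNTF hypothesis we may instead use the cleaner bound $\beta_\calK\leq\beta_K\leq\frac{N}{M}$ established in the previous step. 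Substituting this bound, rearranging for $\alpha_\calK$, and minimizing over all $K$-element $\calK$ yields the asserted lower bound.

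There is no substantive obstacle here: the argument is a routine adaptation of the Theorem~\ref{theorem.NERF bounds for general frames} proof, and the only real choice is which upper estimate for $\beta_K$ to plug in. The reason this variant deserves its own statement is quantitative: for moderate $\varepsilon$ the bound $\frac{N}{M}$ is typically far smaller than $\frac{1}{1-\varepsilon^2}\beta_{K,\varepsilon}$, so the resulting lower bound on $\alpha_K$ is correspondingly sharper than the one produced by Theorem~\ref{theorem.NERF bounds for general frames} alone, which matters most precisely when $\alpha_K$ is small and hence hardest to certify.
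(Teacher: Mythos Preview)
Your proposal is correct and follows essentially the same approach as the paper: identify the tight frame constant as $\tfrac{N}{M}$ via a trace computation, invoke~\eqref{equation.upper estimate in terms of overall frame bound} to get $\beta_K\leq\tfrac{N}{M}$, and then replay the combination of~\eqref{equation.proof of NERF bounds for general frames 5} and~\eqref{equation.proof of NERF bounds for general frames 6} from the proof of Theorem~\ref{theorem.NERF bounds for general frames} with $\beta_\calK\leq\tfrac{N}{M}$ in place of~\eqref{equation.proof of NERF bounds for general frames 4}. Your added remarks on why the UNTF variant is quantitatively worthwhile also mirror the paper's own discussion.
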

\noindent
To verify the intuition that led up to Theorem~\ref{theorem.NERF bounds for UNTFs}, note that it is indeed stronger than Theorem~\ref{theorem.NERF bounds for general frames} whenever
\begin{equation*}
\tfrac NM
\leq\tfrac1{1-\varepsilon^2}\beta_{K,\varepsilon}
=\tfrac1{1-\varepsilon^2}\max_{p=1,\dotsc,P}\max_{\abs{\calK}=K}\sum_{n\in\calK}\abs{\ip{\psi_p}{\varphi_n}}^2
=\tfrac1{1-\varepsilon^2}\max_{p=1,\dotsc,P}\max_{\abs{\calK}=K}\Bigparen{\tfrac NM-\sum_{n\notin\calK}\abs{\ip{\psi_p}{\varphi_n}}^2}.
\end{equation*}
Simplifying, we see that this is equivalent to having
\begin{equation*}
(1-\varepsilon^2)\tfrac NM
\leq\tfrac NM-\min_{p=1,\dotsc,P}\min_{\abs{\calK}=K}\sum_{n\notin\calK}\abs{\ip{\psi_p}{\varphi_n}}^2.
\end{equation*}
That is, Theorem~\ref{theorem.NERF bounds for UNTFs} outperforms Theorem~\ref{theorem.NERF bounds for general frames} whenever there exists a $p$ and $\calK$ such that $\sum_{n\notin\calK}\abs{\ip{\psi_p}{\varphi_n}}^2\leq\tfrac NM\varepsilon^2$.
Numerical experimentation reveals that this is often the case for the explicit NERF constructions we introduce in the following sections. Indeed, in those examples $\varepsilon$ is not too small and $K$ is a significant fraction of $N$, meaning it is plausible for there to exist at least one $\psi_p$ which is nearly orthogonal to a small number of frame elements $\set{\varphi_n}_{n\notin\calK}$.
Before moving on to those constructions, we conclude this section with a brief discussion of how the above theory does not directly generalize to the deterministic RIP matrix construction problem.

To be clear, $\varepsilon$-nets are already popular subjects in compressed sensing, being fundamental tools of random matrix theory~\cite{RudelsonV:09}.
However, in that literature, $\varepsilon$-nets are exploited analytically rather than computationally, and are seldom constructed explicitly.
In short, fix $K\leq M\leq N$ and consider a tall $M\times K$ submatrix $\Phi_\calK$ of a randomly generated $M\times N$ sensing matrix $\Phi$.
If the rows of $\Phi_\calK$ are chosen randomly, the distribution of $\norm{\Phi_\calK z}^2$ along any given direction $z\in\bbR^K$ is highly concentrated around its mean.
By applying a union bound over an $\varepsilon$-net for $\bbR^K$ that represents the many different choices for $z$, one finds that with high probability, the values of $\norm{\Phi_\calK z}^2$ are a nearly constant function of $z$.
This, in turn, implies that the eigenvalues of $\Phi_\calK^*\Phi_\calK^{}$ are nearly constant, as needed for RIP.

With regards to deterministic constructions of RIP matrices, the problem with this approach is that it essentially makes use of a distinct  $\varepsilon$-net for the column space of $\Phi_\calK$ for each choice of $\calK$.
To be precise, in the NERF construction problem, we have $M\leq K$ and use a single $\varepsilon$-net for $\bbR^M$ to simultaneously estimate the singular values of each \textit{short} $M\times K$ submatrix of $\Phi$.
This stems from the fact that the optimal NERF bounds~\eqref{equation.definition of optimal NERF bounds} can be simplified by changing the order of optimization~\eqref{equation.derivation of optimal NERF bounds 2} and sorting the inner products $\set{\ip{x}{\varphi_n}}_{n=1}^{N}$ according to magnitude~\eqref{equation.derivation of optimal NERF bounds 3},
yielding readily-computable quantities~\eqref{equation.derivation of optimal NERF bounds 4} to be evaluated over this $\varepsilon$-net.
That is, in the NERF problem, the choice of $\varepsilon$-net is independent of $\calK$.
The same argument falls apart when attempting to estimate restricted isometry constants since, as in the random approach, the $\varepsilon$-net must lie in $\bbR^K$, meaning it depends on one's choice of $\calK$.
Formulaically, this is seen by writing the optimal lower restricted isometry constant~\eqref{equation.definition of RIP} in terms of two successive minimizations which cannot be easily interchanged:
\begin{equation*}
\min_{\abs{\calK}=K}\min_{\substack{y\in\bbR^N\\\mathrm{supp}(y)\subseteq\calK}}\biggnorm{\sum_{n=1}^Ny(n)\varphi_n}^2.
\end{equation*}
Of course, this is not to say that the ideas presented here are not applicable to the RIP problem.
Indeed, since the largest eigenvalue of $\Phi_\calK^*\Phi_\calK^{}$ equals that of $\Phi_\calK^{}\Phi_\calK^*$, Theorem~\ref{theorem.NERF bounds for general frames} can be used to estimate the upper restricted isometry constant of a proposed RIP matrix.
However, the smallest eigenvalues of these matrices are essentially unrelated, meaning that a computationally tractable method for estimating lower restricted isometry constants remains elusive.


\section{Group frame constructions of NERFs and $\varepsilon$-nets}

In the previous section, we introduced a new method for numerically estimating optimal NERF bounds~\eqref{equation.definition of optimal NERF bounds}.
This method involves evaluating the frame analysis operator $\Phi^*$ at each point $\psi_p$ of an $\varepsilon$-net~\eqref{equation.definition of epsilon net} and sorting the resulting values \smash{$\bigset{\abs{\ip{\psi_p}{\varphi_n}}^2}_{n=1}^{N}$} in nondecreasing order.
We then use these sorted values to compute the $\varepsilon$-approximate NERF bounds~\eqref{equation.definition of approximate NERF bounds} which estimate the optimal NERF bounds according to Theorems~\ref{theorem.NERF bounds for general frames} and~\ref{theorem.NERF bounds for UNTFs}.
In this section, we discuss how the number of elements in an $\varepsilon$-net grows exponentially with $M$, making this approach computationally infeasible for general frames.
We then show how we can sidestep this computational burden provided the NERF and $\varepsilon$-net are constructed using the action of a large finite group of orthogonal matrices.

Random matrix theorists have found elegant arguments for guaranteeing the existence of efficient $\varepsilon$-nets~\cite{RudelsonV:09}.
For example, given $\varepsilon>0$, we can iteratively construct \smash{$\set{\psi_p}_{p=1}^{P}$} as follows: take any $\psi_1\in\bbS^{M-1}$, and given \smash{$\set{\psi_p}_{p=1}^{k}$}, choose any $\psi_{k+1}\in\bbS^{M-1}$ whose Euclidean distance from each previous $\psi_p$ is greater than $\varepsilon$.
Note that for any $k$, the closed balls around $\set{\psi_p}_{p=1}^{k}$ of radius $\frac{\varepsilon}2$ are disjoint from each other and, by the triangle inequality, lie inside the ball of radius $1+\frac{\varepsilon}2$ centered at the origin.
As such, the total volume of these tiny balls is less than the volume of the large one, meaning $k(\frac{\varepsilon}2)^M\leq(1+\frac{\varepsilon}2)^M$.
In particular, we see that this process must terminate at some $k=P$, where $P\leq(1+\frac2{\varepsilon})^M$.
Moreover, note this process only terminates when the $\varepsilon$-balls around \smash{$\set{\psi_p}_{p=1}^{P}$} cover $\bbS^{M-1}$.
This means \smash{$\set{\psi_p}_{p=1}^{P}$} is an $\varepsilon$-net for $\bbS^{M-1}$, both in terms of Euclidean distance as well as chordal distance~\eqref{equation.definition of epsilon net}, since for any $x\in\bbS^{M-1}$ there exists $p$ such that
\begin{equation}
\label{equation.deriving net for sphere}
1-\abs{\ip{x}{\psi_p}}^2
=(1+\ip{x}{\psi_p})(1-\ip{x}{\psi_p})
\leq2(1-\ip{x}{\psi_p})
=\norm{x-\psi_p}_2^2
\leq\varepsilon^2.
\end{equation}
In summary, this means that for any given $\varepsilon>0$, there exists an $\varepsilon$-net~\eqref{equation.definition of epsilon net} for $\bbS^{M-1}$ of at most \smash{$(1+\frac2{\varepsilon})^M$} elements.

Despite the elegance of this argument, this result is disheartening from a computational perspective, since $(1+\frac2{\varepsilon})^M$ is enormous for even modest choices of $\varepsilon$ and $M$.
Moreover, though it is possible to slightly improve the above estimates, one can also show---borrowing techniques from Section~3 of \cite{FickusM:12}, for example---that the number of points required for any $\varepsilon$-net for $\bbS^{M-1}$ grows exponentially with $M$.
In particular, for an arbitrary frame $\set{\varphi_n}_{n=1}^{N}$, it is infeasible to numerically compute $\Phi^*\psi_p$ for every choice of $p$.
Indeed, it is this fact which motivates the main idea of this section: we construct frames $\set{\varphi_n}_{n=1}^{N}$ and $\varepsilon$-nets $\set{\psi_p}_{p=1}^{P}$ with such high degrees of symmetry that it suffices to only compute $\Phi^*\psi_p$ over a small subset of the $\psi_p$'s.
In particular, we found that such highly symmetric frames and $\varepsilon$-nets can be constructed using the theory of \textit{group frames}~\cite{ValeW:05}.
As a side effect, the frames generated by this approach happen to be UNTFs, allowing us to use the NERF bound estimates of Theorem~\ref{theorem.NERF bounds for UNTFs} in addition to those of Theorem~\ref{theorem.NERF bounds for general frames}.

To be precise, let \smash{$\calU=\set{U_q}_{q=1}^{Q}$} be a finite subgroup of the group $\rmO(M)$ of all real $M\times M$ orthogonal matrices.
We say that $\set{\varphi_n}_{n=1}^{N}$ is \textit{$\calU$-invariant} if
\begin{equation}
\label{equation.definition of group invariance}
\forall q=1,\dotsc,Q,\ \exists\text{ a permutation $\sigma$ of $\set{1,\dotsc,N}$ s.t. }U_q\varphi_n=\pm\varphi_{\sigma(n)}, \forall n=1,\dotsc,N.
\end{equation}
That is, a frame is $\calU$-invariant if each element of the group $\calU$ simply rearranges the frames elements; here, as is common in the frames literature, we make no distinction between a frame element and its negative, as each yields the same outer product, therefore leading to identical frame operators.
As we now discuss, $\calU$-invariant frames permit us to compute their $\varepsilon$-approximate NERF bounds $\alpha_{K,\varepsilon}$ and $\beta_{K,\varepsilon}$~\eqref{equation.definition of approximate NERF bounds} with surprising efficiency, provided the $\varepsilon$-net possesses like symmetry.

Indeed, consider an $\varepsilon$-net obtained by orbiting a finite set of generators under the action of $\calU$.
That is, given $\varepsilon>0$, choose $\set{\psi_r}_{r=1}^{R}\subseteq\bbS^{M-1}$ such that \smash{$\set{U_q\psi_r}_{q=1,}^{Q}\,_{r=1}^{R}$} is an $\varepsilon$-net~\eqref{equation.definition of epsilon net}.
Here, note that computing $\alpha_{K,\varepsilon}$ and $\beta_{K,\varepsilon}$ seemingly involves $QR$ operator-vector multiplications of the form $\Phi^*(U_q\psi_r)$:
for each $q$ and $r$, we first compute $\Phi^*(U_q\varphi_r)=\set{\ip{U_q\psi_r}{\varphi_n}}_{n=1}^{N}$ and then sum the $K$ smallest and largest values \smash{$\bigset{\abs{\ip{U_q\psi_r}{\varphi_n}}^2}_{n=1}^{N}$};
taking the minimum and maximum of these lower and upper sums over all $q$ and $r$ yields $\alpha_{K,\varepsilon}$ and $\beta_{K,\varepsilon}$.
However, only $R$ of these operator-vector multiplications are truly needed: since $\set{\varphi_n}_{n=1}^{N}$ is $\calU$-invariant, then for any $q$ and $r$, rewriting~\eqref{equation.definition of group invariance} as $U_q^*\varphi_n=\pm\varphi_{\sigma^{-1}(n)}$ reveals the values \smash{$\bigset{\abs{\ip{U_q\psi_r}{\varphi_n}}^2}_{n=1}^{N}=\bigset{\abs{\ip{\psi_r}{U_q^*\varphi_n}}^2}_{n=1}^{N}=\bigset{\abs{\ip{\psi_r}{\varphi_{\sigma^{-1}(n)}}}^2}_{n=1}^{N}$} to be a rearrangement of the values \smash{$\bigset{\abs{\ip{\psi_r}{\varphi_n}}^2}_{n=1}^{N}$}.
As such, the sum of the $K$ smallest and largest values of \smash{$\bigset{\abs{\ip{U_q\psi_r}{\varphi_n}}^2}_{n=1}^{N}$} equals the sum of the $K$ smallest and largest values of \smash{$\bigset{\abs{\ip{\psi_r}{\varphi_n}}^2}_{n=1}^{N}$}, respectively.
Thus, we truly only need to evaluate $\Phi^*\psi_r$ for each $r$, a $Q$-fold speedup over the direct method.
We summarize these ideas in the following result.
\begin{theorem}
\label{theorem.computing approximate NERF bounds with group symmetry}
Let $\calU$ be a finite group of orthogonal matrices over $\bbR^M$, and let $\set{\varphi_n}_{n=1}^{N}$ be $\calU$-invariant~\eqref{equation.definition of group invariance}.
Then, choosing any $\set{\psi_r}_{r=1}^{R}\subseteq\bbS^{M-1}$ such that \smash{$\set{U_q\psi_r}_{q=1,}^{Q}\,_{r=1}^{R}$} is an $\varepsilon$-net~\eqref{equation.definition of epsilon net}, the corresponding $\varepsilon$-approximate NERF bounds~\eqref{equation.definition of approximate NERF bounds} can be computed as follows:
\begin{equation*}
\alpha_{K,\varepsilon}
=\min_{r=1,\dotsc,R}\sum_{n=1}^K\abs{\ip{\psi_r}{\varphi_{\sigma(n)}}}^2,
\qquad
\beta_{K,\varepsilon}
=\max_{r=1,\dotsc,R}\sum_{n=N-K+1}^N\abs{\ip{\psi_r}{\varphi_{\sigma(n)}}}^2,
\end{equation*}
where the $r$-dependent permutation $\sigma$ is chosen so that $\bigset{\abs{\ip{\psi_r}{\varphi_{\sigma(n)}}}^2}_{n=1}^{N}$ is arranged in nondecreasing order.
\end{theorem}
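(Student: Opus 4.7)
The plan is to show that the sorted magnitude sequence $\bigset{\abs{\ip{U_q\psi_r}{\varphi_n}}^2}_{n=1}^N$ is independent of $q$, so that the $Q$-fold redundancy in the $\varepsilon$-net $\set{U_q\psi_r}$ collapses when one takes the minimum/maximum over subset sums. Once this invariance is established, the claimed formulas follow immediately from the definition~\eqref{equation.definition of approximate NERF bounds}.

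First I would start from the definition of the $\varepsilon$-approximate bounds applied to the full net \smash{$\set{U_q\psi_r}_{q=1,}^{Q}{}_{r=1}^{R}$}, rewriting $\alpha_{K,\varepsilon}$ as a min over pairs $(q,r)$ of $\sum_{n=1}^{K}\abs{\ip{U_q\psi_r}{\varphi_{\sigma(n)}}}^2$ (and similarly for $\beta_{K,\varepsilon}$). The key manipulation is to push $U_q$ across the inner product: $\abs{\ip{U_q\psi_r}{\varphi_n}}^2=\abs{\ip{\psi_r}{U_q^*\varphi_n}}^2$. Since $\calU$ is a group, $U_q^*=U_q^{-1}\in\calU$, so the invariance hypothesis~\eqref{equation.definition of group invariance} supplies a permutation $\pi_q$ of $\set{1,\dotsc,N}$ with $U_q^*\varphi_n=\pm\varphi_{\pi_q(n)}$. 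Squaring absorbs the sign, giving $\abs{\ip{U_q\psi_r}{\varphi_n}}^2=\abs{\ip{\psi_r}{\varphi_{\pi_q(n)}}}^2$.

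This shows that \smash{$\bigset{\abs{\ip{U_q\psi_r}{\varphi_n}}^2}_{n=1}^{N}$} is merely a reindexing of \smash{$\bigset{\abs{\ip{\psi_r}{\varphi_n}}^2}_{n=1}^{N}$}. Because the sum of the $K$ smallest (respectively $K$ largest) entries of a multiset is a symmetric function of that multiset, the inner sums $\sum_{n=1}^{K}\abs{\ip{U_q\psi_r}{\varphi_{\sigma(n)}}}^2$ (with $\sigma$ sorting by magnitude) coincide with $\sum_{n=1}^{K}\abs{\ip{\psi_r}{\varphi_{\sigma'(n)}}}^2$ for the $r$-sorted permutation $\sigma'$, regardless of $q$. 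The outer minimum over $q$ is then vacuous, leaving only the minimum over $r$, and the analogous argument handles $\beta_{K,\varepsilon}$.

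There is no real obstacle here; the only point requiring a moment's care is verifying that $U_q^*$ lies in $\calU$ so that invariance can be applied to it (which follows from $\calU$ being a \emph{group} under composition and from orthogonality giving $U_q^*=U_q^{-1}$), together with the observation that the $\pm$ sign in~\eqref{equation.definition of group invariance} disappears after squaring. Both of these are immediate, so the proof reduces to cleanly chaining together the substitution $U_q\psi_r\mapsto U_q^*\varphi_n$, the invariance permutation, and the sort-invariance of the top-$K$ and bottom-$K$ sum functionals.
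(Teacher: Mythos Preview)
Your proposal is correct and follows essentially the same approach as the paper: move $U_q$ across the inner product, use $\calU$-invariance to see that $\bigset{\abs{\ip{U_q\psi_r}{\varphi_n}}^2}_{n=1}^N$ is a permutation of $\bigset{\abs{\ip{\psi_r}{\varphi_n}}^2}_{n=1}^N$, and conclude that the bottom-$K$ and top-$K$ sums are independent of $q$. The only cosmetic difference is that the paper obtains $U_q^*\varphi_n=\pm\varphi_{\sigma^{-1}(n)}$ directly by inverting the relation $U_q\varphi_n=\pm\varphi_{\sigma(n)}$, whereas you invoke the group property to apply invariance to $U_q^{-1}\in\calU$; both routes are immediate.
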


In light of this result, we now focus on the problem of constructing explicit $\calU$-invariant frames and $\calU$-generated $\varepsilon$-nets.
In particular, recall from our previous discussion that the number of elements in our $\varepsilon$-net \smash{$\set{U_q\psi_r}_{q=1,}^{Q}\,_{r=1}^{R}$} is necessarily very large.
At the same time, we want to take $R$ as small as possible in order for the approach of Theorem~\ref{theorem.computing approximate NERF bounds with group symmetry} to be computationally tractable.
Together, these facts suggest we should use a group $\calU$ that is very large.

Taking a large $\calU$ however poses a challenge from the perspective of constructing a useful $\calU$-invariant frame $\set{\varphi_n}_{n=1}^{N}$.
Indeed, the most obvious construction of such a frame is as the orbit \smash{$\set{U_q\varphi}_{q=1}^{Q}$} of some $\varphi\in\bbS^{M-1}$ under the action of $\calU$.
In fact, it is known~\cite{ValeW:05} that such frames are necessarily UNTFs---meaning Theorem~\ref{theorem.NERF bounds for UNTFs} applies---provided $\calU$ is \textit{irreducible}, that is, provided the vectors \smash{$\set{U_q x}_{q=1}^Q$} span $\bbR^M$ for any nonzero $x\in\bbR^M$.
Unfortunately, when $\calU$ is very large, such frames are also extremely redundant, making them unattractive from the standpoint of applications.
However, as we see below, this issue can often be addressed by picking $\varphi$ so that \smash{$\set{U_q\varphi}_{q=1}^{Q}$} is actually multiple copies of the same frame $\set{\varphi_n}_{n=1}^{N}$, meaning the actual number of frame elements $N$ will only be a small fraction of $Q$.
Moreover, note that such a frame $\set{\varphi_n}_{n=1}^{N}$ is also a UNTF whenever $\calU$ is irreducible since its frame operator is $\frac NQ$ times the frame operator of the UNTF \smash{$\set{U_q\varphi}_{q=1}^{Q}$}, that is, \smash{$\Phi\Phi^*=\frac NQ\frac QM\rmI=\frac NM\rmI$}.

These facts in hand, we are ready to find the explicit groups, frames and $\varepsilon$-nets needed in Theorem~\ref{theorem.computing approximate NERF bounds with group symmetry}.
We begin by finding large finite irreducible groups of orthogonal matrices.
In this paper we, for the sake of intuitive simplicity, focus on the groups of symmetries of the Platonic solids.
To be clear, there are three types of such solids that exist in $\bbR^M$ for every $M$, namely generalizations of the tetrahedron, octahedron and cube.
The first is the \textit{simplex}, whose symmetry group contains $(M+1)!$ orthogonal matrices, each corresponding to a unique permutation of the simplex's $M+1$ vertices.
An explicit representation of this group is given in~\cite{FickusM:12}, where it is also shown that this group is irreducible.
The second type of Platonic solid in $\bbR^M$ is the \textit{cross-polytope} whose $2M$ vertices are formed by taking the standard basis along with their negatives.
The third type of Platonic solid is the dual of the cross-polytope, namely the \textit{hypercube} whose $2^M$ vertices are formed by taking all $M$-long $\pm1$-valued sequences.

Being duals, the cross-polytope and hypercube have the same symmetry group, namely the set of all $2^M M!$ \textit{signed permutation} matrices obtained by multiplying each of the $M!$ possible permutation matrices by each of the $2^M$ possible $\pm1$-diagonal matrices.
Note this group is irreducible since the set of all signed permutations of any nonzero $x$ necessarily span $\bbR^M$: for any nonzero $x'\in\bbR^M$, picking $q$ and $q'$ so that $U_q x$ and $U_{q'}x'$ are both nonnegative and nondecreasing, we have that $(U_q x)(M),(U_{q'}x')(M)>0$ and so
\begin{equation*}
0
<(U_q x)(M)~(U_{q'}x')(M)
\leq\ip{U_qx}{U_{q'}x'}
=\ip{(U_{q'}^{-1}U_q^{})x}{x'}.
\end{equation*}
For the remainder of this paper, we focus exclusively  on the case where $\calU$ is the group of all signed permutations as opposed to the group of symmetries of the simplex.
We do this for two reasons: signed permutations are easy to understand, and we seek the largest possible group with which to generate our $\varepsilon$-net.
That said, all of the main ideas below carry over to the simplex case.

To reiterate, we generate our $\varepsilon$-net by taking all signed permutations of a given set of generators $\set{\psi_r}_{r=1}^{R}$.
The number $2^M M!$ of such permutations is enormous.
Indeed, for any fixed $\varepsilon>0$, this number eventually grows faster than the number $(1+\frac2{\varepsilon})^M$ of points in the $\varepsilon$-net we discussed earlier.
Following the argument that led to Theorem~\ref{theorem.computing approximate NERF bounds with group symmetry}, this means the sums of the $K$ smallest and largest values of \smash{$\bigset{\abs{\ip{\psi_r}{\varphi_n}}^2}_{n=1}^{N}$} equal those of \smash{$\bigset{\abs{\ip{U_q\psi_r}{\varphi_n}}^2}_{n=1}^{N}$} for any $q=1,\dotsc,Q$, meaning we can compute our $\varepsilon$-approximate NERF bounds $2^M M!$ times faster than before.

Of course, this requires our frame $\set{\varphi_n}_{n=1}^{N}$ to be $\calU$-invariant,
meaning that every signed permutation of any frame element yields another frame element, modulo negation.
If we are not careful, this leads to impractically redundant frames.
Indeed, if any frame element $\varphi_n$ has entries with distinct absolute values, the total number of frame elements is at least $2^{M-1}M!$, leading to a redundancy $\tfrac NM$ of at least $2^{M-1}(M-1)!$.
The only way to avoid this is to generate $\set{\varphi_n}_{n=1}^{N}$ by taking all signed permutations of some $\varphi$ whose entries are mostly zero, and whose nonzero entries assume only a few distinct values.
For example, when $M=4$, though there are $2^M M!=384$ signed permutations of $\varphi=[1\ 1\ 0\ 0]^*$, only $12$ of these lead to frame elements that are distinct modulo negation, namely
\begin{equation}
\label{equation.4,12 Phi}
\Phi
=\frac1{\sqrt{2}}\left[\begin{array}{rrrrrrrrrrrr}1&1&\phantom{-}1&1&\phantom{-}1&1&0&0&0&0&0&0\\1&-1&0&0&0&0&\phantom{-}1&1&\phantom{-}1&1&0&0\\0&0&1&-1&0&0&1&-1&0&0&\phantom{-}1&1\\0&0&0&0&1&-1&0&0&1&-1&1&-1\end{array}\right].
\end{equation}
To be precise, though there are $4!=24$ distinct $4\times 4$ permutation matrices, there are only $\smash{\binom{4}{2}}=6$ distinct permutations of $\varphi=[1\ 1\ 0\ 0]^*$.
Moreover, though there are $2^4=16$ distinct $4\times 4$ diagonal matrices with diagonal entries $\pm1$, for any given fixed permutation of $\varphi=[1\ 1\ 0\ 0]^*$, only $2$ of these lead to frame elements which are distinct modulo negation.
Indeed, for a general $M$, picking $\varphi$ in this manner leads to a frame of \smash{$N=2\binom{M}{2}=M(M-1)$} elements;
though the redundancy $M-1$ of such a frame is still high for some applications, it is nevertheless much more reasonable than the redundancy of $2^{M-1}(M-1)!$ obtained for a general $\varphi\in\bbS^{M-1}$.

To summarize, for any $M$ we have fixed $\calU$ to be the group of all $M\times M$ signed permutation matrices and have discussed how to use that group to construct examples of $\calU$-invariant UNTFs $\set{\varphi_n}_{n=1}^{N}$.
We now want to use the theory of this section and the previous one to explicitly estimate the optimal NERF bounds of $\set{\varphi_n}_{n=1}^{N}$.
In light of Theorem~\ref{theorem.computing approximate NERF bounds with group symmetry}, all that remains to be done is to choose an $\varepsilon>0$ and construct $\set{\psi_r}_{r=1}^{R}\subseteq\bbS^{M-1}$ such that \smash{$\set{U_q\psi_r}_{q=1,}^{Q}\,_{r=1}^{R}$} is an $\varepsilon$-net; the problem of constructing $\set{\psi_r}_{r=1}^{R}\subseteq\bbS^{M-1}$ is the subject of the following section.


\section{Constructing $\varepsilon$-nets for nondecreasing nonnegative vectors}

Letting $\calU$ be the group of all $M\times M$ signed permutation matrices, note that combining the results of Theorems~\ref{theorem.NERF bounds for general frames}, \ref{theorem.NERF bounds for UNTFs} and \ref{theorem.computing approximate NERF bounds with group symmetry} gives a relatively fast method for estimating the optimal NERF bounds~\eqref{equation.definition of optimal NERF bounds} of a $\calU$-invariant UNTF $\set{\varphi_n}_{n=1}^{N}$, provided we are able to construct a relatively small number of vectors $\set{\psi_r}_{r=1}^{R}\subseteq\bbS^{M-1}$ such that \smash{$\set{U_q\psi_r}_{q=1,}^{Q}\,_{r=1}^{R}$} is an $\varepsilon$-net~\eqref{equation.definition of epsilon net} for $\bbS^{M-1}$.
In this section, we discuss how this last problem is equivalent to constructing a small number of vectors $\set{\psi_r}_{r=1}^{R}$ which form an $\varepsilon$-net for the small portion of the unit sphere that consists of all vectors whose entries are nonnegative and are arranged in nondecreasing order.
We further discuss one explicit method for constructing such a collection $\set{\psi_r}_{r=1}^{R}$.

To be precise, we denote the set of all nonnegative, nondecreasing unit-norm vectors as
\begin{equation}
\label{equation.definition of NNUN}
\Snn^{M-1}:=\set{x\in\bbS^{M-1}: \ 0\leq x(1)\leq\cdots\leq x(M)}.
\end{equation}
Note that the orbit $\set{U_q\psi}_{q=1}^Q$ of any given $\psi\in\bbS^{M-1}$ under the action of $\calU$ is invariant under all signed permutations of $\psi$.
As such, when considering $\varepsilon$-nets of the form \smash{$\set{U_q\psi_r}_{q=1,}^{Q}\,_{r=1}^{R}$} we may, without loss of generality, assume that \smash{$\psi_r\in\Snn^{M-1}$} for all $r$.
Moreover, for such $\set{\psi_r}_{r=1}^{R}$ we now show that \smash{$\set{U_q\psi_r}_{q=1,}^{Q}\,_{r=1}^{R}$} being an $\varepsilon$-net for $\bbS^{M-1}$~\eqref{equation.definition of epsilon net} is equivalent to $\set{\psi_r}_{r=1}^{R}$ being an $\varepsilon$-net for $\Snn^{M-1}$, namely to having
\begin{equation}
\label{equation.definition of epsilon net for NNUN}
\set{\psi_r}_{r=1}^{R}\subseteq\Snn^{M-1} \text{ s.t. }\forall x\in\Snn^{M-1},\ \exists r\text{ s.t. } \ip{x}{\psi_r}\geq\sqrt{1-\varepsilon^2}.
\end{equation}

\begin{lemma}
\label{lemma.covering NNUN}
Let $\set{\psi_r}_{r=1}^{R}\subseteq\Snn^{M-1}$~\eqref{equation.definition of NNUN} and let $\calU=\set{U_q}_{q=1}^{Q}$ be the group all signed $M\times M$ permutation matrices.
Then $\set{\psi_r}_{r=1}^{R}$ is an $\varepsilon$-net for $\Snn^{M-1}$~\eqref{equation.definition of epsilon net for NNUN} if and only if $\set{U_q\psi_r}_{q=1,}^{Q}\,_{r=1}^{R}$ is an $\varepsilon$-net for $\bbS^{M-1}$~\eqref{equation.definition of epsilon net}.
\end{lemma}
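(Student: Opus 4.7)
The plan is to prove both directions by exploiting a rearrangement-type inequality which says that the orbit of any $\psi_r\in\Snn^{M-1}$ under $\calU$ maximizes its inner product with any $x\in\Snn^{M-1}$ at $q=1$. Concretely, the key lemma I would establish first is that if $x,y\in\Snn^{M-1}$, then for every signed permutation $U_q\in\calU$,
\begin{equation*}
\abs{\ip{x}{U_q y}}\leq\ip{x}{y}.
\end{equation*}
To see this, note that $U_q y$ has entries of the form $\epsilon_i\, y(\tau(i))$ for some permutation $\tau$ and signs $\epsilon_i=\pm1$, so $\abs{\ip{x}{U_q y}}\leq\sum_i x(i)\,y(\tau(i))$ because $x(i)\geq0$; the classical rearrangement inequality, applied to the two nonnegative nondecreasing sequences $x$ and $y$, then gives $\sum_i x(i)\,y(\tau(i))\leq\sum_i x(i)\,y(i)=\ip{x}{y}$.

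With this in hand, the backward direction is immediate. Suppose $\set{U_q\psi_r}_{q=1,}^{Q}\,_{r=1}^{R}$ is an $\varepsilon$-net for $\bbS^{M-1}$ and let $x\in\Snn^{M-1}$. Pick $q,r$ so that $\abs{\ip{x}{U_q\psi_r}}^2\geq1-\varepsilon^2$. Since both $x$ and $\psi_r$ lie in $\Snn^{M-1}$, the key lemma yields $\ip{x}{\psi_r}\geq\abs{\ip{x}{U_q\psi_r}}\geq\sqrt{1-\varepsilon^2}$, which is exactly~\eqref{equation.definition of epsilon net for NNUN}.

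For the forward direction, the idea is that every $x\in\bbS^{M-1}$ can be moved into $\Snn^{M-1}$ by an element of $\calU$. Explicitly, first apply the $\pm1$-diagonal signed permutation that replaces each $x(i)$ with $\abs{x(i)}$, and then compose with the permutation that sorts the resulting entries in nondecreasing order; both are signed permutations, so their product $U_{q_0}\in\calU$ satisfies $U_{q_0}x\in\Snn^{M-1}$. Assuming $\set{\psi_r}_{r=1}^R$ is an $\varepsilon$-net for $\Snn^{M-1}$, choose $r$ with $\ip{U_{q_0}x}{\psi_r}\geq\sqrt{1-\varepsilon^2}$, and note that $\ip{U_{q_0}x}{\psi_r}=\ip{x}{U_{q_0}^{*}\psi_r}=\ip{x}{U_q\psi_r}$ where $U_q:=U_{q_0}^{*}\in\calU$, giving $\abs{\ip{x}{U_q\psi_r}}^2\geq1-\varepsilon^2$ as required by~\eqref{equation.definition of epsilon net}.

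The main obstacle is really just the key lemma; everything else is bookkeeping about the group action. The lemma itself is elementary once one recognizes it as an instance of the rearrangement inequality, but care is needed to handle the sign flips (which is where nonnegativity of $x$ is essential) before invoking the purely permutation-theoretic inequality on the absolute values. No extra hypotheses beyond those in the statement are needed, and both directions use the lemma in symmetric ways.
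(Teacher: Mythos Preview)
Your proof is correct and follows essentially the same approach as the paper: both directions are handled identically, and the ``key lemma'' you state is exactly the claim the paper proves and uses in the ``if'' direction. The only cosmetic difference is that you invoke the classical rearrangement inequality as a known fact, whereas the paper reproves it via a transposition argument; also, your final remark that ``both directions use the lemma in symmetric ways'' is slightly off, since neither your proof nor the paper's actually uses the rearrangement lemma in the forward direction.
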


\begin{proof}
The ``only if" direction is straightforward, since if $\set{\psi_r}_{r=1}^{R}$ satisfies~\eqref{equation.definition of epsilon net for NNUN}, then for any $x\in\bbS^{M-1}$, we can take $q$ such that \smash{$U_qx\in\Snn^{M-1}$}, and pick $\psi_r$ such that \smash{$\abs{\ip{x}{U_q^{-1}\psi_r}}^2=\abs{\ip{U_qx}{\psi_r}}^2\geq1-\varepsilon^2$}.
This means we can pick the ``$\psi_p$" in~\eqref{equation.definition of epsilon net} to be $U_q^{-1}\psi_r$.

For the less obvious ``if" direction, note that if $\set{\psi_r}_{r=1}^{R}\subseteq\Snn^{M-1}$ has the property that \smash{$\set{U_q\psi_r}_{q=1,}^{Q}\,_{r=1}^{R}$} satisfies~\eqref{equation.definition of epsilon net}, then for any \smash{$x\in\Snn^{M-1}\subseteq\bbS^{M-1}$}, there exist $q$ and $r$ such that \smash{$\abs{\ip{x}{U_q\psi_r}}^2\geq1-\varepsilon^2$}.
Furthermore, by replacing the signed permutation with its negative if necessary, we may assume without loss of generality that $\ip{x}{U_q\psi_r}\geq0$, meaning we actually have \smash{$\ip{x}{U_q\psi_r}=\abs{\ip{x}{U_q\psi_r}}\geq\sqrt{1-\varepsilon^2}$}.
We now claim that if both $x$ and $\psi$ lie in \smash{$\Snn^{M-1}$}, then the signed permutation $U\in\calU$ that maximizes $\ip{x}{U\psi}$ is the identity $\calU=\rmI$.
Note that proving this claim gives the result since it implies \smash{$\ip{x}{\psi_r}\geq\ip{x}{U_q\psi_r}\geq\sqrt{1-\varepsilon^2}$}, as needed for~\eqref{equation.definition of epsilon net for NNUN}.

To prove the claim, note that every signed permutation $U$ is of the form $(U\psi)(m)=(-1)^{\tau(m)}\psi(\sigma(m))$, where $\sigma$ is a permutation of $\set{1,\dotsc,M}$ and $\tau$ is a $\set{0,1}$-valued function defined over $\set{1,\dotsc,M}$.
As such, our goal is to find a $\sigma$ and $\tau$ that maximize \smash{$\ip{x}{U\psi}=\sum_{m=1}^{M}x(m)(-1)^{\tau(m)}\psi(\sigma(m))$}.
Of course, since the entries of $x$ and $\psi$ are nonnegative, such sums only grow larger by taking $\tau(m)=0$ for all $m$.
As such, this problem boils down to finding $\sigma$ such that $\sum_{m=1}^{M}x(m)\psi(\sigma(m))$ is maximized.
To do this, note that for a maximizing $\sigma_0$ and any $m_1<m_2$, evaluating our sum both at $\sigma_0$ as well as at its composition with the two-cycle $\tilde{\sigma}$ that interchanges $\sigma_0(m_1)$ and $\sigma_0(m_2)$ gives
\begin{align*}
0
&\leq\sum_{m=1}^{M}x(m)\psi(\sigma_0(m))-\sum_{m=1}^{M}x(m)\psi((\tilde{\sigma}\circ\sigma_0)(m))\\
&=x(m_1)\psi(\sigma_0(m_1))+x(m_2)\psi(\sigma_0(m_2))-x(m_1)\psi(\sigma_0(m_2))-x(m_2)\psi(\sigma_0(m_1))\\
&=\bigbracket{x(m_1)-x(m_2)}\bigbracket{\psi(\sigma_0(m_1))-\psi(\sigma_0(m_2))}.
\end{align*}
Since $x$ is nondecreasing, this implies that either (i) \smash{$x(m_1)=x(m_2)$} or (ii) \smash{$x(m_1)<x(m_2)$} and $\psi(\sigma_0(m_1))\leq\psi(\sigma_0(m_2))$.
That is, the values of $\psi(\sigma_0(m))$ increase whenever the values of $x(m)$ strictly increase.
Moreover, over intervals where $x(m)$ has constant value, we can rearrange the values $\psi(\sigma_0(m))$ so that they increase there as well; modifying $\sigma_0$ in this way preserves the value of $\sum_{m=1}^{M}x(m)\psi(\sigma(m))$, meaning this new permutation $\sigma_1$ is also a maximizer.
Overall, we see that any $\sigma_0$ which maximizes $\sum_{m=1}^{M}x(m)\psi(\sigma(m))$ yields another maximizer $\sigma_1$ that has the additional property that $\psi\circ\sigma_1$ is nondecreasing.
To conclude, note that although there may be several such $\sigma_1$'s, the fact that $\psi$ is itself nondecreasing implies that $\psi=\psi\circ\sigma_1$, meaning that the maximum value of \smash{$\ip{x}{U\psi}=\sum_{m=1}^{M}x(m)(-1)^{\tau(m)}\psi(\sigma(m))$} is indeed $\ip{x}{\psi}=\sum_{m=1}^{M}x(m)\psi(m)$, as claimed.
\end{proof}

Having Lemma~\ref{lemma.covering NNUN}, we see the true computational advantage offered by Theorem~\ref{theorem.computing approximate NERF bounds with group symmetry}: whereas a direct computation of the $\varepsilon$-approximate bounds~\eqref{equation.definition of approximate NERF bounds} requires evaluating $\Phi^*$ at every point of an $\varepsilon$-net for the entire sphere, we instead only need to evaluate $\Phi^*$ at every point of an $\varepsilon$-net~\eqref{equation.definition of epsilon net for NNUN} for the small portion of the sphere that consists of nonnegative nondecreasing vectors~\eqref{equation.definition of NNUN}, provided our frame is invariant under signed permutations.
Indeed, note that since the signed permutations of \smash{$\Snn^{M-1}$} form an essential partition of $\bbS^{M-1}$, we have that the surface area of \smash{$\Snn^{M-1}$} is that of $\bbS^{M-1}$ divided by $2^M M!$.
It is therefore plausible for the number of elements $R$ in an $\varepsilon$-net for \smash{$\Snn^{M-1}$} to be much smaller than the bound  \smash{$(1+\frac2{\varepsilon})^M$} on the number of elements in the $\varepsilon$-net for $\bbS^{M-1}$ that we discussed in the previous section.
At the same time, it is unreasonable to expect $R$ to vanish on the order of \smash{$(1+\frac2{\varepsilon})^M/(2^M M!)$}, since you cannot form an $\varepsilon$-net for \smash{$\Snn^{M-1}$} by simply taking the elements of an $\varepsilon$-net for $\bbS^{M-1}$ that happen to lie inside of \smash{$\Snn^{M-1}$}; doing so ignores the fact that \smash{$\Snn^{M-1}$} is increasingly thin as $M$ grows, meaning that for large $M$, many of the elements that cover parts of \smash{$\Snn^{M-1}$} actually lie outside of it.

To get an upper bound on the number of elements $R$ in a decent $\varepsilon$-net for \smash{$\Snn^{M-1}$}, we now mimic the volumetric argument that produced the \smash{$(1+\frac2{\varepsilon})^M$} bound on the number of points in an $\varepsilon$-net for $\bbS^{M-1}$.
Here, it is more convenient to work with $\infty$-balls (cubes) rather than $2$-balls, since the nonnegativity and monotonicity  conditions that define \smash{$\Snn^{M-1}$} must hold for all indices $m$.

To be precise, given $\varepsilon>0$, we iteratively choose $\psi_r\in\Snn^{M-1}$ so that \smash{$\norm{\psi_r-\psi_{r'}}_\infty>M^{-\frac12}\varepsilon$} for all $r'<r$.
We continue to do so until it is no longer possible, forming $\set{\psi_r}_{r=1}^{R}$ that has the property that for every $x\in\Snn^{M-1}$, there exists $r$ such that \smash{$\norm{x-\psi_r}_\infty\leq M^{-\frac12}\varepsilon$}.
Note that such a $\set{\psi_r}_{r=1}^{R}$ is an $\varepsilon$-net for $\Snn^{M-1}$~\eqref{equation.definition of epsilon net for NNUN} since for any $x\in\Snn^{M-1}$, \eqref{equation.deriving net for sphere} gives $1-\abs{\ip{x}{\psi_r}}^2\leq\norm{x-\psi_r}_2^2\leq M\norm{x-\psi_r}_\infty^2\leq\varepsilon^2$.
All that remains is to bound $R$.
To do this, note that the $\infty$-balls around $\set{\psi_r}_{r=1}^{R}$ of radius \smash{$\frac12 M^{-\frac12}\varepsilon$} are disjoint, meaning their total volume \smash{$R(M^{-\frac12}\varepsilon)^M$} is less than the volume of any set that contains them.
To find such a set, take any $x\in\bbR^M$ for which there exists $r$ such that \smash{$\norm{x-\psi_r}_\infty\leq\frac12 M^{-\frac12}\varepsilon$}.
Since $\psi_r\in\Snn^{M-1}$, this means there exists a nonnegative nondecreasing function each of whose entries $\psi_r(m)$ is within \smash{$\frac12 M^{-\frac12}\varepsilon$} units of $x(m)$.
In particular, defining $\omega\in\bbR^M$, \smash{$\omega(m):=(2m-1)\frac12 M^{-\frac12}\varepsilon$}, we see that $x+\omega$ is nonnegative and nondecreasing.
Moreover, the maximum entry of $x+\omega$ is easily bounded from above:
\begin{equation*}
\norm{x+\omega}_\infty
\leq\norm{x-\psi_r}_\infty+\norm{\psi_r}_\infty+\norm{\omega}_\infty
\leq\tfrac12 M^{-\frac12}\varepsilon+\norm{\psi_r}_2+(2M-1)\tfrac12 M^{-\frac12}\varepsilon
\leq1+M^{\frac12}\varepsilon.
\end{equation*}
Writing $\tilde{x}=x+\omega$, these facts together imply that the $\infty$-balls around $\set{\psi_r}_{r=1}^{R}$ of radius \smash{$\frac12 M^{-\frac12}\varepsilon$} all lie inside the set
\begin{equation}
\label{equation.volumetric derivation of NNUN net 1}
-\omega+\set{\tilde{x}\in\bbR^M: \ 0\leq x(1)\leq\cdots\leq x(M)\leq1+M^{\frac12}\varepsilon},
\end{equation}
whose volume equals the fraction of the volume $[2(1+M^{\frac12}\varepsilon)]^M$ of the cube $\set{x\in\bbR^M:\ \norm{x}_\infty\leq1+M^{\frac12}\varepsilon}$ that corresponds to nonnegative nondecreasing vectors $x$.
As the signed permutations partition of this portion of the cube form an essential partition of the entire cube,
we see that the volume of~\eqref{equation.volumetric derivation of NNUN net 1} is \smash{$[2(1+M^{\frac12}\varepsilon)]^M/(2^M M!)=(1+M^{\frac12}\varepsilon)^M/M!$}.
To summarize, for the $\varepsilon$-net $\set{\psi_r}_{r=1}^{R}$ for $\Snn^{M-1}$ that we have constructed in this manner, we necessarily have that the total volume \smash{$R(M^{-\frac12}\varepsilon)^M$} of the tiny cubes is no more than the volume \smash{$(1+M^{\frac12}\varepsilon)^M/M!$} of \eqref{equation.volumetric derivation of NNUN net 1}.
Solving for $R$ and then using Stirling's approximation \smash{$M!\geq\sqrt{2\pi} M^{M+\frac12}\rme^{-M}$} gives
\begin{equation}
\label{equation.volumetric bound on NNUN net}
R
\leq\frac1{M!}\biggparen{M+\frac{\sqrt{M}}{\varepsilon}}^M
\leq\frac{\rme^M}{\sqrt{2\pi M}}\biggparen{1+\frac1{\sqrt{M}\varepsilon}}^M.
\end{equation}
Note that this bound on the number of points $R$ in an $\varepsilon$-net for $\Snn^{M-1}$ is an improvement over the number of points \smash{$(1+\frac2{\varepsilon})^M$}.
Indeed, the base of the exponent in~\eqref{equation.volumetric bound on NNUN net} can remain constant even if we allow \smash{$\varepsilon=\frac1{\sqrt{M}}$}.
That is, for $M$ large, we can produce an excellent estimate for the NERF bounds of a signed-permutation-invariant frame using far less computation than is needed for a coarse estimate of the NERF bounds of a not-so-symmetric frame.

On the other hand, this method for constructing $\set{\psi_r}_{r=1}^{R}$ leaves much to be desired from the computational perspective expressed in Theorem~\ref{theorem.computing approximate NERF bounds with group symmetry}.
Indeed, note that this method of iteratively ``picking $\psi_r$ such that \smash{$\norm{\psi_r-\psi_{r'}}_\infty>M^{-\frac12}\varepsilon$} for all $r'<r$" is not explicit, yielding no actual values for the entries of $\psi_r$ with which to compute $\alpha_{K,\varepsilon}$ and $\beta_{K,\varepsilon}$.
Moreover, the bound~\eqref{equation.volumetric bound on NNUN net} grows exponentially with $M$; if $R$ truly grows at this rate, then even if we did have $\set{\psi_r}_{r=1}^{R}$ explicitly, it would be computationally intractable to compute $\alpha_{K,\varepsilon}$ and $\beta_{K,\varepsilon}$  via Theorem~\ref{theorem.computing approximate NERF bounds with group symmetry} for all but the smallest values of $M$.

As such, the remainder of this section is focused on the problem of constructing an explicit $\varepsilon$-net $\set{\psi_r}_{r=1}^R$ for $\Snn^{M-1}$ which has the property that $R$ grows subexponentially with $M$.
We stress that at this point in the discussion, it is far from clear that such $\varepsilon$-nets even exist; as far as we know, the only way to demonstrate their existence is to use the explicit construction method we now introduce.

When constructing $\varepsilon$-nets~\eqref{equation.definition of epsilon net for NNUN} for $\Snn^{M-1}$, we found it helpful to view the vectors $x,\psi_r\in\Snn^{M-1}$ as nonnegative, nondecreasing unit-norm functions over a discrete real axis $\set{1,\dotsc,M}$.
This perspective allows us to draw inspiration from real analysis, specifically the theory of integration.
In short, an $\varepsilon$-net~\eqref{equation.definition of epsilon net for NNUN} can be viewed as a fixed set of nonnegative, nondecreasing, unit-norm functions $\set{\psi_r}_{r=1}^R$ which has the property that every other such function $x$ looks a lot like one of them.
In the theory of integration, the standard way to approximate a nonnegative, nondecreasing function $x$ is to use a step function.

To be precise, we will first construct an explicit set of nonnegative, nondecreasing step functions $\set{\hat{\psi}_r}_{r=1}^{R}$ and then normalize them $\psi_r:=\hat{\psi}_r/\norm{\hat{\psi}_r}$ to form our $\varepsilon$-net $\set{\psi_r}_{r=1}^{R}\subseteq\Snn^{M-1}$.
In order to retain control of the number $R$ of these step functions, we will only allow the functions $\hat{\psi}_r$ to attain one of $L$ distinct possible positive values \smash{$\set{b_l}_{l=0}^{L-1}$}.
Here, we fix $b_0=1$ and assume the $b_l$'s are sorted in decreasing order.
That is, in a manner similar to Lebesgue integration, we take horizontal slices of any given nonnegative, nondecreasing, unit-norm $x$, decomposing $\set{1,\dotsc,M}$ into the preimages \smash{$x^{-1}(b_{l+1},b_l]=\set{m : b_{l+1}<x(m)\leq b_l}$}.
For any such $x$, we then compute an approximating step function $\hat{\psi}_x$ by ``rounding up" the values of $x$, that is, by defining
\begin{equation}
\label{equation.definition of general step net}
\hat{\psi}_x(m):=\left\{\begin{array}{crcccl}b_l,&\qquad b_{l+1}&<&x(m)&\leq&b_l,\\b_{L-1},&\qquad0&\leq&x(m)&\leq&b_{L-1}.\end{array}\right.
\end{equation}
Note that since $x$ is nonnegative and has unit norm, taking $b_0=1$ ensures that~\eqref{equation.definition of general step net} indeed defines $\hat{\psi}_x$ at every index $m$.
The remaining quantization levels \smash{$\set{b_l}_{l=1}^{L-1}$} are free for us to choose.
Though we investigated spacing these levels uniformly, that is, letting \smash{$b_l=\frac{L-l}L$} for all $l$, we found that exponential spacing led to better, more elegant results about the approximating properties of our step functions.
As such, for any $\delta\in(0,1)$, we let $b_l=\delta^l$ for all $l=0,\dotsc,L-1$, meaning~\eqref{equation.definition of general step net} becomes
\begin{equation}
\label{equation.definition of exponential step net}
\hat{\psi}_x(m):=\left\{\begin{array}{{crcccl}}\delta^l,&\qquad\delta^{l+1}&<&x(m)&\leq&\delta^{l},\\\delta^{L-1},&\qquad 0&\leq&x(m)&\leq&\delta^{L-1}.\end{array}\right.
\end{equation}
Using such exponentially spaced levels makes it straightforward to estimate the inner product between any given $x\in\Snn^{M-1}$ and its corresponding step function $\psi_x:=\hat{\psi}_x/\norm{\hat{\psi}_x}$:
\begin{equation}
\label{equation.exponential step net calculation 1}
\ip{x}{\psi_x}
=\frac{\ip{x}{\hat{\psi}_x}}{\norm{\hat{\psi}_x}}
=\frac{\displaystyle\sum_{m=1}^{M}x(m)\hat{\psi}_x(m)}{\displaystyle\biggparen{\,\sum_{m=1}^{M}\abs{\hat{\psi}_x(m)}^2}^{\frac12}}.
\end{equation}
To be clear, we want a lower bound on~\eqref{equation.exponential step net calculation 1}, as needed for our $\varepsilon$-net~\eqref{equation.definition of epsilon net for NNUN}.
Moreover, a lower bound on the numerator in~\eqref{equation.exponential step net calculation 1} follows immediately from the fact that we ``round up" in \eqref{equation.definition of exponential step net}, implying $\hat{\psi}_x(m)\geq x(m)$ for all $m$, and thus
\begin{equation}
\label{equation.exponential step net calculation 2}
\sum_{m=1}^{M}x(m)\hat{\psi}_x(m)
\geq\sum_{m=1}^{M}\abs{x(m)}^2
=\norm{x}^2
=1.
\end{equation}
Meanwhile, an upper bound on the denominator in~\eqref{equation.exponential step net calculation 1} can be obtained by exploiting the exponential spacing of our quantization levels.
Indeed, for any index $m$ such that $x(m)>\delta^{L-1}$, we know there exists $l=0,\dotsc,L-2$ such that $\delta^{l+1}< x(m)\leq \delta^{l}$; here, the fact that $x(m)\leq1$ follows from $x(m)\leq\norm{x}=1$.
For such $m$, \eqref{equation.definition of exponential step net} then gives $\hat{\psi}_x(m)=\delta^l=\delta^{-1}\delta^{l+1}<\delta^{-1}x(m)$.
In particular, the square of the denominator in~\eqref{equation.exponential step net calculation 1} satisfies
\begin{align}
\nonumber
\sum_{m=1}^{M}\abs{\hat{\psi}_x(m)}^2
&=\sum_{\substack{m=1\\x(m)>\delta^{L-1}}}^{M}\abs{\hat{\psi}_x(m)}^2+\sum_{\substack{m=1\\x(m)\leq\delta^{L-1}}}^{M}\abs{\hat{\psi}_x(m)}^2\\
\nonumber
&\leq\sum_{\substack{m=1\\x(m)>\delta^{L-1}}}^{M}\delta^{-2}\abs{x(m)}^2+\sum_{\substack{m=1\\x(m)\leq\delta^{L-1}}}^{M}\delta^{2(L-1)}\\
\label{equation.exponential step net calculation 3}
&\leq\delta^{-2}+M\delta^{2(L-1)}.
\end{align}
Putting~\eqref{equation.exponential step net calculation 2} and~\eqref{equation.exponential step net calculation 3} together gives our lower bound on \eqref{equation.exponential step net calculation 1}:
\begin{equation}
\label{equation.exponential step net calculation 4}
\ip{x}{\psi_x}
\geq[\delta^{-2}+M\delta^{2(L-1)}]^{-\frac12}.
\end{equation}
At this point we pick $\delta\in(0,1)$ so that the bound~\eqref{equation.exponential step net calculation 4} is as strong as possible, meaning we pick $\delta$ to be the square root of the $x\in(0,1)$ that minimizes $\frac1x+Mx^{L-1}$.
For $L\geq 2$, calculus reveals this minimizing $x=\delta^2$ to be $[M(L-1)]^{-\frac1L}$, at which point~\eqref{equation.exponential step net calculation 4} becomes
\begin{equation}
\label{equation.exponential step net calculation 5}
\ip{x}{\psi_x}
\geq\bigparen{\tfrac{L-1}L}^\frac12[(L-1)M]^{-\frac1{2L}}.
\end{equation}
As a sanity check, note that for any fixed $M$, the right hand side of~\eqref{equation.exponential step net calculation 5} approaches $1$ from below as $L$ grows large.
This makes sense, since for a large number of quantization levels, we expect our step function $\psi_x$ to be a very good approximation of $x$, meaning $\ip{x}{\psi_x}\approx\ip{x}{x}=1$.
At the same time, for any fixed $L$, this same quantity vanishes as $M$ grows large.
This is due to the fact that using any fixed number of levels results in increasingly poor approximations of some high-dimensional $x$'s.
For an $\varepsilon$-net~\eqref{equation.definition of epsilon net for NNUN}, we seek a compromise between these two extremes.
In particular, for any given $\varepsilon>0$ and $M$, \eqref{equation.exponential step net calculation 5} suggests we take $L$ such that
\smash{$\bigparen{\tfrac{L-1}L}^\frac12[(L-1)M]^{-\frac1{2L}}\geq\sqrt{1-\varepsilon^2}$}.
Rearranging this expression, we summarize these facts in the following result.
\begin{lemma}
\label{lemma.sufficient number of levels}
For any $M$ and $\varepsilon$, take $\delta=[M(L-1)]^{-\frac1{2L}}$ and $L\geq 2$ such that
\begin{equation}
\label{equation.sufficient number of levels}
(L-1)(1-\varepsilon^2)^L\leq\tfrac1M\bigparen{\tfrac{L-1}L}^L.
\end{equation}
Then for any $x\in\Snn^{M-1}$, the step function $\psi_x=\hat{\psi}_x/\norm{\hat{\psi}_x}$, where $\hat{\psi}_x$ is defined in~\eqref{equation.definition of exponential step net}, satisfies $\ip{x}{\psi_x}>\sqrt{1-\varepsilon^2}$.
\end{lemma}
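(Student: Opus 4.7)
The plan is to directly compute $\ip{x}{\psi_x} = \ip{x}{\hat{\psi}_x}/\norm{\hat{\psi}_x}$ by separately lower bounding the numerator and upper bounding the denominator, then optimizing the resulting estimate in $\delta$ and rearranging into the hypothesis \eqref{equation.sufficient number of levels}.

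First, I would handle the numerator. By the definition \eqref{equation.definition of exponential step net} of the rounded-up step function, we always have $\hat{\psi}_x(m) \geq x(m)$ pointwise: on indices where $\delta^{l+1} < x(m) \leq \delta^l$ this is immediate, and on indices where $x(m) \leq \delta^{L-1}$ it holds because $\hat{\psi}_x(m) = \delta^{L-1}$. Therefore $\sum_{m=1}^M x(m)\hat{\psi}_x(m) \geq \sum_{m=1}^M x(m)^2 = 1$ since $x \in \Snn^{M-1}$.

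Second, I would bound the denominator. Split the sum $\sum_{m=1}^M |\hat{\psi}_x(m)|^2$ into indices where $x(m) > \delta^{L-1}$ and where $x(m) \leq \delta^{L-1}$. On the first set, each $\hat{\psi}_x(m) = \delta^l < \delta^{-1} x(m)$ for the appropriate $l$, so the contribution is at most $\delta^{-2} \sum_m x(m)^2 \leq \delta^{-2}$. On the second set, each $\hat{\psi}_x(m) = \delta^{L-1}$, and there are at most $M$ such indices, contributing at most $M\delta^{2(L-1)}$. Combined, this gives $\norm{\hat{\psi}_x}^2 \leq \delta^{-2} + M\delta^{2(L-1)}$ and hence $\ip{x}{\psi_x} \geq [\delta^{-2}+M\delta^{2(L-1)}]^{-1/2}$.

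Third, I would carry out the one-variable optimization: set $t = \delta^2 \in (0,1)$ and minimize $f(t) = t^{-1} + M t^{L-1}$. Setting $f'(t) = 0$ gives the critical point $t = [M(L-1)]^{-1/L}$, which is exactly the prescribed $\delta^2$. Substituting back and simplifying yields $\delta^{-2} + M\delta^{2(L-1)} = \tfrac{L}{L-1}[(L-1)M]^{1/L}$, so $\ip{x}{\psi_x} \geq \bigparen{\tfrac{L-1}{L}}^{1/2}[(L-1)M]^{-1/(2L)}$. The only remaining step is to observe that the requirement $\bigparen{\tfrac{L-1}{L}}^{1/2}[(L-1)M]^{-1/(2L)} \geq \sqrt{1-\varepsilon^2}$ is, after raising both sides to the $2L$-th power and rearranging, exactly the hypothesis \eqref{equation.sufficient number of levels}; the strict inequality $\ip{x}{\psi_x} > \sqrt{1-\varepsilon^2}$ follows because the second-set contribution $M\delta^{2(L-1)}$ is an overestimate whenever any index has $x(m) < \delta^{L-1}$, and equality throughout would force $x$ outside $\Snn^{M-1}$.

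The only mildly delicate step is the calculus optimization and the algebraic simplification that turns $f(t_*)$ into the clean expression $\tfrac{L}{L-1}[(L-1)M]^{1/L}$; everything else is bookkeeping that follows directly from the construction of $\hat{\psi}_x$ and the exponential spacing $b_l = \delta^l$.
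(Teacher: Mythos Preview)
Your proof is essentially identical to the paper's: the same numerator bound via $\hat{\psi}_x(m)\geq x(m)$, the same denominator split into indices with $x(m)>\delta^{L-1}$ versus $x(m)\leq\delta^{L-1}$ leading to $\norm{\hat{\psi}_x}^2\leq\delta^{-2}+M\delta^{2(L-1)}$, the same calculus optimization in $t=\delta^2$, and the same rearrangement into~\eqref{equation.sufficient number of levels}. The one thing you add is a remark justifying the strict inequality, which the paper itself does not address explicitly; your argument there is a bit informal, but the cleaner route is simply to note that on the set where $x(m)>\delta^{L-1}$ the paper's pointwise bound $\hat{\psi}_x(m)<\delta^{-1}x(m)$ is already strict, and this set is nonempty for any unit-norm $x$ once one checks $\delta^{L-1}<1$ fails to dominate the largest entry.
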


To get a better idea of how large $L$ has to be in order for~\eqref{equation.sufficient number of levels} to hold, note that $\bigparen{\tfrac{L-1}L}^L$ approaches $\frac1\rme$ for large $L$.
More precisely, making use of some easily checked facts, we find that for any $L\geq 2$,
\begin{equation*}
-\log\Bigbracket{\bigparen{\tfrac{L-1}L}^L}
=1+L\int_{\frac{L-1}L}^{1}\!\int_y^1 \frac1{x^2}\,\rmd x\rmd y
\leq1+L\int_{\frac{L-1}L}^{1}\!\int_y^1 \tfrac{L^2}{(L-1)^2}\,\rmd x\rmd y
=1+\tfrac{L}{2(L-1)^2}
\leq 2,
\end{equation*}
and so $\bigparen{\tfrac{L-1}L}^L\geq\frac1{\rme^2}$.
In particular, in order for~\eqref{equation.sufficient number of levels} to hold, it suffices to pick $L\geq 2$ so that
\begin{equation}
\label{equation.sufficient number of levels restated}
(L-1)(1-\varepsilon^2)^L\leq\tfrac1{M\rme^2}.
\end{equation}
In essence, this means that for any fixed $\varepsilon$, we can take $L$ to grow as a logarithm of $M$.
This is significant since the size of $L$ strongly affects the number of distinct step functions of the form $\psi_x$.

To elaborate, note that in light of Lemma~\ref{lemma.sufficient number of levels}, we would like to define our $\varepsilon$-net  $\set{\psi_r}_{r=1}^{R}$ for $\Snn^{M-1}$ as the set of all distinct step functions of the form $\psi_x$ for some $x\in\Snn^{M-1}$.
Unfortunately, it is difficult to describe this set of $\psi_x$'s explicitly.
Instead, we settle for a slightly larger set that is easier to parametrize.
To this end, note that for any nonnegative, nondecreasing, unit-norm $\psi_x$, we have that the pre-normalized step function $\hat{\psi}_x$~\eqref{equation.definition of general step net} is also nondecreasing.
As such, each $\psi_x$ lives in the finite set
\begin{equation}
\label{equation.definition of explicit exponential step net}
\Snn^{M-1}(L,\delta)
:=\Biggset{\psi=\frac{\hat{\psi}}{\norm{\hat{\psi}}}\ \Bigg|\ \hat{\psi}:\set{1,\dotsc,M}\rightarrow\set{\delta^l}_{l=0}^{L-1},\ \, \hat{\psi}(1)\leq\cdots\leq \hat{\psi}(M)},
\end{equation}
and so we let $\set{\psi_r}_{r=1}^{R}$ be an enumeration of the points in \smash{$\Snn^{M-1}(L,\delta)$}.
To find the number of elements $R$ in this $\varepsilon$-net, note that each \smash{$\psi\in\Snn^{M-1}(L,\delta)$} arises from a unique nonincreasing function $\eta:\set{1,\dotsc,M}\rightarrow\set{0,\dotsc,L-1}$ so that $\hat{\psi}(m)=\delta^{\eta(m)}$ for all $m$.
That is, $R$ is the number of all such functions $\eta$.
Since there are at most $L$ choices of $\eta(m)$ for each $m=1,\dotsc,M$, we clearly have the upper bound \smash{$R\leq M^L$}.
To compute $R$ exactly, note that each nonincreasing $\eta$ corresponds to a unique way in which $M$ can be written as a sum of $L$ nonnegative integers, each integer representing the number of times $\eta$ attains a given value $l$.
This is one of the classical ``stars and bars" problems of combinatorics: each choice of $\eta$ corresponds to a unique way of placing $L-1$ ``bars" in the spaces between $M+L$ ``stars," and then removing one star from each of the $L$ resulting blocks.
That is, we have
\begin{equation*}
R
=\binom{M+L-1}{L-1}
\leq M^L.
\end{equation*}
For any fixed $\varepsilon>0$, recalling from~\eqref{equation.sufficient number of levels restated} that $L$ can grow logarithmically with $M$, we therefore see that~\eqref{equation.definition of explicit exponential step net} defines an explicit $\varepsilon$-net for $\Snn^{M-1}$ whose cardinality is a subexponential function of $M$.
This is a dramatic improvement over our volumetric bound~\eqref{equation.volumetric bound on NNUN net} on the size of such $\varepsilon$-nets.
In the next section, we provide numerical experimentation to better indicate exactly how $R$ grows as a function of $M$ and $\varepsilon$.
We conclude this section by combining these final facts with the results of Theorems~\ref{theorem.NERF bounds for general frames}, \ref{theorem.NERF bounds for UNTFs} and~\ref{theorem.computing approximate NERF bounds with group symmetry} as well as Lemmas~\ref{lemma.covering NNUN} and~\ref{lemma.sufficient number of levels}, yielding our main result.
\begin{theorem}
\label{theorem.main result}
Let $\set{\varphi_n}_{n=1}^{N}$ be a unit norm tight frame for $\bbR^M$ which is invariant~\eqref{equation.definition of group invariance} under the action of $M\times M$ signed permutation matrices.
For any $\varepsilon>0$, let $\delta=[M(L-1)]^{-\frac1{2L}}$, take $L\geq 2$ such that $(L-1)(1-\varepsilon^2)^L\leq\tfrac1M\bigparen{\tfrac{L-1}L}^L$, and consider the $\binom{M+L-1}{L-1}$-element collection of step functions  $\set{\psi_r}_{r=1}^{R}=\Snn^{M-1}(L,\delta)$ defined in~\eqref{equation.definition of explicit exponential step net}.

Then for any $M\leq K\leq N$, the optimal NERF bounds~\eqref{equation.definition of optimal NERF bounds} of $\set{\varphi_n}_{n=1}^{N}$ satisfy the estimates
\begin{equation}
\label{equation.main result estimates}
\tfrac1{1-\varepsilon^2}\Bigparen{\alpha_{K,\varepsilon}-\varepsilon^2\min\bigset{\tfrac NM,\tfrac1{1-\varepsilon^2}\beta_{K,\varepsilon}}}
\leq\alpha_K
\leq\alpha_{K,\varepsilon},
\qquad
\beta_{K,\varepsilon}
\leq\beta_K
\leq\min\bigset{\tfrac NM,\tfrac1{1-\varepsilon^2}\beta_{K,\varepsilon}},
\end{equation}
where $\alpha_{K,\varepsilon}$ and $\beta_{K,\varepsilon}$ are found by the following process: for any $r=1,\dotsc,R$, let $\alpha_{K,\varepsilon,r}$ and $\beta_{K,\varepsilon,r}$ be the sums of the $K$ smallest and $K$ largest values of $\bigset{\abs{\ip{\psi_r}{\varphi_n}}^2}_{n=1}^{N}$, respectively; let $\displaystyle\alpha_{K,\varepsilon}=\min_{r}\alpha_{K,\varepsilon,r}$ and $\displaystyle\beta_{K,\varepsilon}=\max_r\beta_{K,\varepsilon,r}$\,.
\end{theorem}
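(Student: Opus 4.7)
The proof will be a direct synthesis of Theorems~\ref{theorem.NERF bounds for general frames}, \ref{theorem.NERF bounds for UNTFs}, and~\ref{theorem.computing approximate NERF bounds with group symmetry} with Lemmas~\ref{lemma.covering NNUN} and~\ref{lemma.sufficient number of levels}; the only real work is to get the minimum on the right-hand side of~\eqref{equation.main result estimates} to propagate correctly into the lower bound on $\alpha_K$. First, I would verify that $\set{\psi_r}_{r=1}^{R}=\Snn^{M-1}(L,\delta)$ constitutes an $\varepsilon$-net for $\Snn^{M-1}$ in the sense of~\eqref{equation.definition of epsilon net for NNUN}. For this, Lemma~\ref{lemma.sufficient number of levels} applies directly, since the hypotheses $\delta=[M(L-1)]^{-\frac1{2L}}$ and $(L-1)(1-\varepsilon^2)^L\leq\tfrac1M\bigparen{\tfrac{L-1}L}^L$ are exactly what that lemma requires: given any $x\in\Snn^{M-1}$, the step function $\psi_x=\hat\psi_x/\norm{\hat\psi_x}$ built by~\eqref{equation.definition of exponential step net} lies in $\Snn^{M-1}(L,\delta)$ and satisfies $\ip{x}{\psi_x}\geq\sqrt{1-\varepsilon^2}$. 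Hence some element of $\set{\psi_r}_{r=1}^{R}$ $\varepsilon$-covers $x$.

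Next, I would promote this $\varepsilon$-net for $\Snn^{M-1}$ to an $\varepsilon$-net for the full sphere $\bbS^{M-1}$. The ``only if'' direction of Lemma~\ref{lemma.covering NNUN} gives immediately that $\set{U_q\psi_r}_{q=1,}^{Q}\,_{r=1}^{R}$ satisfies~\eqref{equation.definition of epsilon net}, since $\calU$ is the group of $M\times M$ signed permutation matrices. Because $\set{\varphi_n}_{n=1}^{N}$ is $\calU$-invariant by hypothesis, Theorem~\ref{theorem.computing approximate NERF bounds with group symmetry} then applies: the $\varepsilon$-approximate NERF bounds $\alpha_{K,\varepsilon}$ and $\beta_{K,\varepsilon}$ from~\eqref{equation.definition of approximate NERF bounds} computed over the full orbit net can be evaluated using only the $R$ generators $\set{\psi_r}_{r=1}^{R}$, by the formulas in Theorem~\ref{theorem.computing approximate NERF bounds with group symmetry}. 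This yields exactly the computational procedure for $\alpha_{K,\varepsilon}$ and $\beta_{K,\varepsilon}$ described in the last sentence of the theorem statement.

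Finally, I would feed $\alpha_{K,\varepsilon}$ and $\beta_{K,\varepsilon}$ into Theorems~\ref{theorem.NERF bounds for general frames} and~\ref{theorem.NERF bounds for UNTFs} and keep the sharper of each competing estimate. The bounds $\beta_{K,\varepsilon}\leq\beta_K$ and $\alpha_K\leq\alpha_{K,\varepsilon}$ come directly from Theorem~\ref{theorem.NERF bounds for general frames}; the upper bound on $\beta_K$ is then the minimum of $\tfrac1{1-\varepsilon^2}\beta_{K,\varepsilon}$ (from Theorem~\ref{theorem.NERF bounds for general frames}) and $\tfrac NM$ (from Theorem~\ref{theorem.NERF bounds for UNTFs}, since $\set{\varphi_n}_{n=1}^{N}$ is a UNTF by hypothesis). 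The only mildly delicate point is the lower bound on $\alpha_K$. Revisiting the end of the proof of Theorem~\ref{theorem.NERF bounds for general frames}, the key intermediate inequality is $\alpha_{K,\varepsilon}\leq(1-\varepsilon^2)\alpha_\calK+\varepsilon^2\beta_\calK$; rather than bounding $\beta_\calK$ with just the general estimate $\tfrac1{1-\varepsilon^2}\beta_{K,\varepsilon}$ or just the UNTF estimate $\tfrac NM$, I would use $\beta_\calK\leq\min\bigset{\tfrac NM,\tfrac1{1-\varepsilon^2}\beta_{K,\varepsilon}}$. Solving for $\alpha_\calK$ and minimizing over all $K$-element subsets $\calK$ then produces the left-hand inequality in~\eqref{equation.main result estimates}. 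There is no substantive obstacle here beyond bookkeeping: all the real arguments have already been done in the preceding sections, and what remains is only to package them into a single self-contained statement.
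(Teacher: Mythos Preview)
Your proposal is correct and matches the paper's approach exactly: the paper presents Theorem~\ref{theorem.main result} without a separate proof, stating only that it follows by combining Theorems~\ref{theorem.NERF bounds for general frames}, \ref{theorem.NERF bounds for UNTFs}, \ref{theorem.computing approximate NERF bounds with group symmetry} with Lemmas~\ref{lemma.covering NNUN} and~\ref{lemma.sufficient number of levels}, and your outline carries this out in detail. The one point you flag as ``mildly delicate''---revisiting the intermediate inequality $\alpha_{K,\varepsilon}\leq(1-\varepsilon^2)\alpha_\calK+\varepsilon^2\beta_\calK$ and bounding $\beta_\calK$ by the minimum---is equivalent to simply taking the larger of the two lower bounds on $\alpha_K$ furnished by Theorems~\ref{theorem.NERF bounds for general frames} and~\ref{theorem.NERF bounds for UNTFs}, so either route is fine.
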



\section{Numerical examples of NERFs generated via symmetry groups}

In the previous section, we provided our main result (Theorem~\ref{theorem.main result}) which encapsulates all the main ideas of the paper, providing a numerical scheme for estimating the optimal NERF bounds of certain highly symmetric frames.
In this section, we offer numerical experimentation to better indicate how useful this result actually is.
In short, we shall see that while the method of Theorem~\ref{theorem.main result} is extremely fast compared to existing methods, it still requires a significant amount of computation for even modest choices of $M$ and $\varepsilon$.

We first describe our approach to implement~Theorem~\ref{theorem.main result}.
Given a fixed $K$, then for each choice of $r=1,\dotsc,R$, we perform the following three steps: we first compute \smash{$\bigset{\abs{\ip{\psi_r}{\varphi_n}}^2}_{n=1}^{N}$} using $\calO(MN)$ operations; we then sort the resulting values in nondecreasing order using $\calO(N\log N)$ operations; we finally sum the $K$ smallest and largest values of \smash{$\bigset{\abs{\ip{\psi_r}{\varphi_n}}^2}_{n=1}^{N}$} to form $\alpha_{K,\varepsilon,r}$ and $\beta_{K,\varepsilon,r}$, respectively.
That is, for each $r=1,\dotsc,R$, we expect to perform $\calO((M+\log N)N)$ operations.
Implementing these calculations as a ``for" loop over all $r=1,\dotsc,R$, we keep track of a running minimum of $\alpha_{K,\varepsilon,r}$ as well as a running maximum of $\beta_{K,\varepsilon,r}$, in the end using $\calO((M+\log N)NR)$ operations to compute $\alpha_{K,\varepsilon}$ and $\beta_{K,\varepsilon}$, as needed for~\eqref{equation.main result estimates}.

Note the above approach presents an opportunity: though Theorem~\ref{theorem.main result} as stated applies to some fixed $K$, the above process can be slightly modified so as to simultaneously estimate the optimal NERF bounds that arise for \textit{every} choice of $K$ between $M$ and $N$.
To be clear, consider the point in the above process where, for any given $r$, we have arranged the values \smash{$\bigset{\abs{\ip{\psi_r}{\varphi_n}}^2}_{n=1}^{N}$} in nondecreasing order.
Rather than fixing $K$ and summing the $K$ first and last values in this list, we can alternatively, at the cost of only an additional $\calO(N)$ operations, compute cumulative sums of this list from both its beginning and end, thereby simultaneously computing the values $\alpha_{K,\varepsilon,r}$ and $\beta_{K,\varepsilon,r}$ for all choices of $K$.
Keeping the running termwise minima and maxima of these cumulative sum functions then produces $\alpha_{K,\varepsilon}$ and $\beta_{K,\varepsilon}$ which, when used in~\eqref{equation.main result estimates}, estimate the optimal NERF bounds of $\set{\varphi_n}_{n=1}^{N}$ for every choice of $K$.
In particular, this trick allows us to determine, at little additional cost, the point at which our lower bound on $\alpha_{K,\varepsilon}$ becomes positive, namely the smallest value of $K$ for which our approach guarantees that every $K$ vectors in $\set{\varphi_n}_{n=1}^{N}$ span $\bbR^M$.

On the topic of speed, we further note that in nearly all the numerical examples we investigated, $R$ turned out to be much larger than both $M$ and $N$.
Since our approach involves $\calO((M+\log N)NR)$ operations overall, this means that the size of $R$ is the key factor in determining whether we can, in a reasonable amount of time, determine the desired estimates for a given frame $\set{\varphi_n}_{n=1}^{N}$ and $\varepsilon>0$.
Moreover, as stated earlier, for any fixed $\varepsilon>0$ the size of the smallest $L\geq 2$ such that \smash{$(L-1)(1-\varepsilon^2)^L\leq\tfrac1M\bigparen{\tfrac{L-1}L}^L$} grows logarithmically with $M$,
and so the number of elements \smash{$R=\binom{M+L-1}{L-1}\leq M^L$} in the $\varepsilon$-net~\eqref{equation.definition of explicit exponential step net} grows subexpontially with $M$.
Though this is an improvement over the exponential growth in $M$ that we expect from~\eqref{equation.volumetric bound on NNUN net}, this number $R$ can still be enormous for even modest choices of $M$ and $\varepsilon>0$.
For example, when applying Theorem~\ref{theorem.main result} to the frame~\eqref{equation.4,12 Phi} of $N=12$ elements in $M=4$ dimensions, the sizes of the smallest possible $L\geq 2$ and resulting value \smash{$\binom{M+L-1}{L-1}$} are, for various values of $\varepsilon$, given in the following table:
\begin{equation}
\label{equation.4,12 R}
\begin{array}{lccc}
\varepsilon^2&L&\binom{M+L-1}{L-1}&R_{\mathrm{improved}}\smallskip\\
\hline\noalign{\smallskip}
2^{-1}          &6      &126        &45         \\
2^{-2}          &19     &7315       &1107       \\
2^{-3}          &47     &230300     &15916      \\
2^{-4}          &110    &6438740    &202628     \\
2^{-5}          &249    &164059875  &2366922
\end{array}
\end{equation}

The values in the fourth column correspond to the improved values of $R$ obtained by working with an $\varepsilon$-net which is a proper subset of the $\varepsilon$-net $\Snn^{M-1}(L,\delta)$ given in~\eqref{equation.definition of explicit exponential step net}.
To be precise, recall that the motivation behind $\Snn^{M-1}(L,\delta)$ is to take an easily-parametrized $\varepsilon$-net which, in accordance with Lemma~\ref{lemma.sufficient number of levels}, contains the normalized versions $\psi_x$ of all the $\set{\delta^l}_{l=0}^{L-1}$-valued step functions $\hat{\psi}_x$ obtained by ``rounding up" any nonnegative, nondecreasing unit norm $x$.
Though the definition~\eqref{equation.definition of explicit exponential step net} of $\Snn^{M-1}(L,\delta)$ relies on the fact that such $x$'s are nonnegative and nondecreasing, it takes no advantage of their unit length.
Indeed, in order for a nondecreasing $\set{\delta^l}_{l=0}^{L-1}$-valued step function $\hat{\psi}$ to arise as the rounded-up version $\hat{\psi}_x$ of some $x\in\Snn^{M-1}$, note it necessarily satisfies:
\begin{equation}
\label{equation.exponential step net condition 1}
1
=\sum_{m=1}^{M}\abs{x(m)}^2
\leq\sum_{m=1}^{M}\abs{\hat{\psi}(m)}^2
=\norm{\hat{\psi}}^2.
\end{equation}
Moreover, recalling the previously used fact that $\hat{\psi}_x(m)<\delta^{-1}x(m)$ for all indices $m$ such that $x(m)>\delta^{L-1}$, such a $\hat{\psi}$ must also satisfy:
\begin{equation}
\label{equation.exponential step net condition 2}
1
=\sum_{m=1}^{M}\abs{x(m)}^2
\geq\sum_{\substack{m=1\\x(m)>\delta^{L-1}}}^{M}\abs{x(m)}^2
\geq\sum_{\substack{m=1\\x(m)>\delta^{L-1}}}^{M}\delta^2\abs{\hat{\psi}(m)}^2
=\delta^2\!\!\!\sum_{\substack{m=1\\\hat{\psi}(m)>\delta^{L-1}}}^{M}\abs{\hat{\psi}(m)}^2.
\end{equation}
In particular, we can obtain an alternative version of Theorem~\ref{theorem.main result} in which the $\varepsilon$-net $\Snn^{M-1}(L,\delta)$ is replaced with a proper subset of itself, namely
\begin{equation}
\label{equation.definition of explicit exponential step net with extra conditions}
\Snn^{M-1}(L,\delta)\cap\Biggset{\psi=\frac{\hat{\psi}}{\norm{\hat{\psi}}}\ :\ \norm{\hat{\psi}}^2\geq1\geq\delta^2\!\!\!\sum_{\substack{m=1\\\hat{\psi}(m)>\delta^{L-1}}}^{M}\abs{\hat{\psi}(m)}^2}.
\end{equation}
In the special case where $M=4$ and $N=2$, the number of elements in~\eqref{equation.definition of explicit exponential step net with extra conditions} for various choices of $\varepsilon$ is given in the last column of~\eqref{equation.4,12 R}.
We note that in practice, we compute the members of this smaller $\varepsilon$-net by still forming each of the \smash{$\binom{M+L-1}{L-1}$} possible choices of $\psi\in\Snn^{M-1}(L,\delta)$.
However, those $\psi$'s that arise from $\hat{\psi}$'s which do not satisfy the additional conditions of~\eqref{equation.definition of explicit exponential step net with extra conditions} are ``skipped" in the process of computing $\alpha_{K,\varepsilon}$ and $\beta_{K,\varepsilon}$.
That is, for such $\psi$, we do not compute, sort, and form cumulative sums of the values \smash{$\bigset{\abs{\ip{\psi}{\varphi_n}}^2}_{n=1}^{N}$}.
For example, implementing this alternative version of Theorem~\ref{theorem.main result} in \textsc{Matlab} and applying it to the $12$-element UNTF~\eqref{equation.4,12 Phi} for $\bbR^4$, we obtain the following values for the $\varepsilon$-approximate lower NERF bound $\alpha_{K,\varepsilon}$ for various values of $\varepsilon$:
\begin{equation}
\label{equation.4,12 alpha_K_epsilon}
\begin{scriptsize}
\begin{array}{lcccccccccccc}
\varepsilon^2   &K=1&K=2&K=3&K=4&K=5&K=6&K=7&K=8&K=9&K={10}&K={11}&K={12}\\
\hline\noalign{\smallskip}
2^{-1}  &0.0000 &0.0000 &0.0000 &0.0000 &0.0000 &0.0000 &0.3821 &0.7275 &1.0039 &1.5811 &2.1068 &3.0000\\
2^{-2}  &0.0000 &0.0000 &0.0000 &0.0000 &0.0000 &0.0000 &0.3824 &0.7193 &1.0003 &1.5213 &2.0325 &3.0000\\
2^{-3}  &0.0000 &0.0000 &0.0000 &0.0000 &0.0000 &0.0000 &0.3821 &0.7192 &1.0000 &1.5085 &2.0117 &3.0000\\
2^{-4}  &0.0000 &0.0000 &0.0000 &0.0000 &0.0000 &0.0000 &0.3820 &0.7192 &1.0000 &1.5036 &2.0047 &3.0000\\
2^{-5}  &0.0000 &0.0000 &0.0000 &0.0000 &0.0000 &0.0000 &0.3820 &0.7192 &1.0000 &1.5015 &2.0021 &3.0000\\
\hline
\alpha_K&0.0000 &0.0000 &0.0000 &0.0000 &0.0000 &0.0000 &0.3820 &0.7192 &1.0000 &1.5000 &2.0000 &3.0000
\end{array}
\end{scriptsize}
\end{equation}
We then use each $\alpha_{K,\varepsilon}$ to find the lower estimate $\tfrac1{1-\varepsilon^2}\Bigparen{\alpha_{K,\varepsilon}-\varepsilon^2\tfrac NM}$ on optimal lower NERF bound $\alpha_K$:
\begin{equation}
\label{equation.4,12 alpha_K_lower bound}
\begin{scriptsize}
\begin{array}{lrrrrrrrrrrrr}
\varepsilon^2   &K=1&K=2&K=3&K=4&K=5&K=6&K=7&K=8&K=9&K={10}&K={11}&K={12}\\
\hline\noalign{\smallskip}
2^{-1}  &-3.0000&-3.0000&-3.0000&-3.0000&-3.0000&-3.0000&-2.2358&-1.5451&-0.9921&0.1621 &1.2135 &3.0000\\
2^{-2}  &-1.0000&-1.0000&-1.0000&-1.0000&-1.0000&-1.0000&-0.4901&-0.0409&0.3337 &1.0284 &1.7100 &3.0000\\
2^{-3}  &-0.4286&-0.4286&-0.4286&-0.4286&-0.4286&-0.4286&0.0081 &0.3934 &0.7143 &1.2955 &1.8705 &3.0000\\
2^{-4}  &-0.2000&-0.2000&-0.2000&-0.2000&-0.2000&-0.2000&0.2075 &0.5672 &0.8667 &1.4038 &1.9383 &3.0000\\
2^{-5}  &-0.0968&-0.0968&-0.0968&-0.0968&-0.0968&-0.0968&0.2975 &0.6457 &0.9355 &1.4532 &1.9699 &3.0000\\
\hline
\alpha_K&0.0000 &0.0000 &0.0000 &0.0000 &0.0000 &0.0000 &0.3820 &0.7192 &1.0000 &1.5000 &2.0000 &3.0000
\end{array}
\end{scriptsize}
\end{equation}
For the purposes of comparison, the last rows of~\eqref{equation.4,12 alpha_K_epsilon} and~\eqref{equation.4,12 alpha_K_lower bound} give the actual numerical values of the optimal lower NERF bound $\alpha_K$ for each $K$.
Note that each $\alpha_K$ lies below its upper estimate~\eqref{equation.4,12 alpha_K_epsilon} and above its lower estimate~\eqref{equation.4,12 alpha_K_lower bound}.
Moreover, as a function of $\varepsilon$, the upper estimate $\alpha_{K,\varepsilon}$ appears to converge to $\alpha_K$ much faster than the lower estimate.
This makes sense, since $\alpha_{K,\varepsilon}$ is the solution to a discretization of the minimization problem~\eqref{equation.definition of optimal NERF bounds} that defines $\alpha_K$.
Regardless, note that even for the not-too-small value $2^{-3}$ of $\varepsilon^2$, we see from the coarse estimates in the corresponding row of~\eqref{equation.4,12 alpha_K_lower bound} that any $7$ of these $12$ vectors form a frame for $\bbR^4$;
looking at the frame itself~\eqref{equation.4,12 Phi}, we see this is the smallest $K$ for which this is true since the last $6$ columns of $\Phi$ clearly do not span $\bbR^4$.
In short, sometimes even not-so-small values of $\varepsilon$ are good enough to produce a positive lower bound on $\alpha_K$.
This is significant since otherwise we may have no information about $\alpha_K$ whatsoever.

To be clear, the ``exact" values of $\alpha_K$ given in the final rows of~\eqref{equation.4,12 alpha_K_epsilon} and~\eqref{equation.4,12 alpha_K_lower bound} were obtained for any $K$ by having \textsc{Matlab} find the minimum of the smallest eigenvalue of \smash{$\Phi_\calK^{}\Phi_\calK^*$} over all $K$-element subsets $\calK\subseteq\set{1,\dotsc,N}$;
this approach is only feasible in this particular example since $N=12$ is so small, meaning \smash{$\binom{N}{K}$} is not too large.
Indeed, for such small values of $N$, this approach is faster than our $\varepsilon$-net-based technique since the number of points in our $\varepsilon$-net~\eqref{equation.4,12 R} becomes enormous when $\varepsilon$ gets small.
For example, on a current generation laptop, we could compute the $\alpha_K$ row of~\eqref{equation.4,12 alpha_K_epsilon} in under a second whereas the $\varepsilon^2=2^{-3}$ and $\varepsilon^2=2^{-5}$ rows took several seconds and over an hour, respectively.
When $M$ and $N$ becomes large, this phenomenon disappears and our methods truly start to shine.

For example, when $M=6$, taking $\set{\varphi_n}_{n=1}^{N}$ to be the \smash{$N=2^{3-1}\binom{6}{3}=80$} signed permutations of $\varphi=[1\ 1\ 1\ 0\ 0\ 0]^*$ which are distinct modulo negation, our \textsc{Matlab} implementation of these ideas took around $8.84$ seconds to find the following lower estimates on $\alpha_K$ for $K=61,\dotsc,80$ using $\varepsilon=\frac12$:
\begin{equation*}
\begin{scriptsize}
\begin{array}{cccccccccccccccccccc}
61&62&63&64&65&66&67&68&69&70&71&72&73&74&75&76&77&78&79&80\\
\hline
0.62&1.22&1.78&2.30&2.89&3.38&3.95&4.46&4.92&5.33&6.06&6.46&7.20&7.93&8.64&9.33&10.25&11.14&12.03&13.33
\end{array}
\end{scriptsize}
\end{equation*}
To do so, it found the smallest sufficient number of quantization levels to be $L=21$, and evaluated the $80\times 6$ analysis operator of $\set{\varphi_n}_{n=1}^{80}$ at each of the $R=32372$ points of the \smash{$\binom{M+L-1}{L-1}=230230$}-element $\varepsilon$-net~\eqref{equation.definition of explicit exponential step net} that satisfy the additional requirements~\eqref{equation.exponential step net condition 1} and~\eqref{equation.exponential step net condition 2}.
Note that in particular, these bounds indicate that any $61$ of the $80$ frame elements span $\bbR^6$.
Obtaining this same fact directly involves forming each of the \smash{$\binom{80}{61}\approx 1.16\times 10^{18}$} such submatrices and showing they have full rank.
More importantly, these bounds indicate how \textit{well} these subcollections span: using the upper bound $\beta_K\leq\frac NM=\frac{80}6$, we see that for any $61$ of these $80$ vectors, the condition number of their frame operator is at most $\frac{80}6\frac1{0.62}\approx21.50$, a very reasonable number for stable reconstruction.

For a more dramatic example, letting $\set{\varphi}_{n=1}^{N}$ be the $N=560$ distinct-modulo-negation signed permutation of $\varphi=[1\ 1\ 1\ 1\ 0\ 0\ 0\ 0]^*$ in $\bbR^8$ and taking $\varepsilon=\frac12$, our \textsc{Matlab} algorithm took about three minutes to show that any $399$ of these vectors spanned $\bbR^8$.
Moreover, it gave the $\varepsilon$-approximate NERF bound $\alpha_{404,\varepsilon}\approx 1.17$ and so we know that the condition number of the frame operator of any $404$ of these vectors is no more than $\frac{560}8\frac1{1.17}\leq60$.
At the same time, it provided similar upper bounds on the condition number of $\Phi_\calK^{}\Phi_\calK^*$ for any $\calK$ such that $\abs{\calK}=K\geq 399$.
To do so, the algorithm took $L=22$ and evaluated the $560\times 8$ analysis operator of $\set{\varphi_n}_{n=1}^{560}$ at the $503487$ points of the $4292145$-element $\varepsilon$-net~\eqref{equation.definition of explicit exponential step net} that satisfy~\eqref{equation.exponential step net condition 1} and~\eqref{equation.exponential step net condition 2}.
While this is indeed a large amount of computation, it pales in comparison to simply forming the $\binom{560}{404}\approx 2.84\times 10^{142}$ possible $8\times 404$ submatrices of $\Phi$, let alone estimating the condition number of each such matrix individually.
Even more dramatically, taking the similarly constructed frame of $N=4032$ elements in $\bbR^{10}$ and letting $\varepsilon=\frac12$, our \textsc{Matlab} algorithm took about one hour and seventeen minutes to show that any $K=2883$ of these elements form a frame for $\bbR^{10}$; the sheer number of such subframes is mind-boggling: $\binom{4032}{2883}\approx 3.65\times 10^{1044}$.

To our knowledge, the theory of this paper is the only currently known method for estimating the optimal NERF bounds for matrices of these sizes.
To be clear, in~\cite{FickusM:12} it is shown that if the entries of $M\times N$ matrix $\Phi$ are independently chosen from a standard normal distribution then $\Phi$ has a high probability of being a good NERF provided $K$ is taken to be no less than about 85\% of $N$.
However, in the example above, we got a meaningful result where $K=399$ is less than $72\%$ of $N$.
To date, the best known explicit construction of a general family of NERFs is to use an equiangular tight frame (ETF) of $M^2-M+1$ vectors in $\mathbb{C}^M$; such frames are robust against the removal of up to half of their frame elements~\cite{FickusM:12}.
Though the examples we present here do not rival the erasure-robustness of these ETF NERFs,
they do provide a lot of design freedom that such frames do not.
In short, the methods used to prove that such ETFs are NERFs do not generalize in the slightest, whereas the methods presented here apply to a whole family of frame constructions.
To be clear, when the ETF-inspired methods are applied in the group frames setting~\cite{FickusM:12}, one finds that $K$ may be taken almost as small as $N-\frac NM$.
However, this fact has a more direct proof:
For any UNTF $\set{\varphi_n}_{n=1}^{N}$ and any $\calK\subseteq\set{1,\dotsc,N}$,
\begin{equation*}
\tfrac NM\norm{x}^2
=\sum_{n=1}^N\abs{\ip{x}{\varphi_n}}^2
=\sum_{n\in\calK}\abs{\ip{x}{\varphi_n}}^2+\sum_{n\notin\calK}\abs{\ip{x}{\varphi_n}}^2
\leq\sum_{n\in\calK}\abs{\ip{x}{\varphi_n}}^2+(N-K)\norm{x}^2,\quad\forall x\in\bbR^M,
\end{equation*}
where the final inequality follows from the Cauchy-Schwarz inequality.  Thus, we see that
\begin{equation*}
[K-(N-\tfrac NM)]\norm{x}^2
\leq\sum_{n\in\calK}\abs{\ip{x}{\varphi_n}}^2
\leq\tfrac NM \norm{x}^2,\quad\forall x\in\bbR^M,
\end{equation*}
meaning that for any $K$ the corresponding optimal NERF bounds satisfy $K-(N-\tfrac NM)\leq\alpha_K\leq\beta_K\leq\tfrac NM$.
In particular, it was already apparent that the previously discussed group frame of $N=560$ vectors in $\bbR^8$ is robust up to $\frac NM=70$ erasures, meaning we could already safely take $K$ as small as $491$.
However, the results of this paper allow us to say much more: applying Theorem~\ref{theorem.main result} in the case where $\varepsilon=\frac12$ tells us $K$ can actually be as small as $399$; applying this same result with a smaller $\varepsilon$ may very well yield even smaller acceptable values of $K$.

\section*{Conclusions and future work}

For many decades, numerical linear algebraists have intensely studied the problem of estimating the extreme eigenvalues of a self-adjoint positive semidefinite matrix.
As a result, we have many excellent algorithms---power methods, QR iterations, etc.---for quickly and accurately computing these eigenvalues for even large matrices.
However, these algorithms cannot be directly applied to many emerging compressed sensing and frame theory problems, since these problems involve estimating the singular values of all submatrices of a given size, leading to a combinatorial nightmare.
What we need are new, computationally tractable algorithms that allow us to estimate the singular values of all of these submatrices simultaneously.

We believe the techniques of this paper are a step in that direction.
However, as seen in the last section, these techniques are still computationally expensive.
Much of this is the fault of our $\varepsilon$-net for $\Snn^{M-1}$ which, despite having far fewer elements than an $\varepsilon$-net for the entire sphere $\bbS^{M-1}$, still contains an enormous number of points for even small choices of $M$.
As such, with regards to taking the results of this paper forward, the most critical problem that needs to be addressed is that of designing more efficient $\varepsilon$-nets for $\Snn^{M-1}$.
Indeed, this seems to be a fundamental problem in analysis: how many nonnegative, nonincreasing unit norm functions do I need to well-approximate every such function, where the quality of the approximation is measured in terms of chordal distance?
Of course, many other important questions arise.
Can the optimal NERF bound estimates of Theorems~\ref{theorem.NERF bounds for general frames} and~\ref{theorem.NERF bounds for UNTFs} be improved?
Is there a better method of performing the quantization that led to Lemma~\ref{lemma.sufficient number of levels}?
Here, we have already tried ``rounding down" as an alternative, but found it to be slightly inferior to the ``rounding up" approach we presented here.
Moreover, how much of the calculation of Theorem~\ref{theorem.main result} can be performed analytically, lessening our computational burden?

\section*{Acknowledgments}
This work was supported by NSF DMS 1042701 and NSF CCF 1017278.
The views expressed in this article are those of the authors and do not reflect the official policy or position of the United States Air Force, Department of Defense, or the U.S.~Government.

\end{document}